\newtheorem{theorem}{Theorem}[section]
\newtheorem{lemma}[theorem]{Lemma}
\newtheorem{cor}[theorem]{Corollary}
\newtheorem{definition}[theorem]{Definition}
\newtheorem{example}[theorem]{Example}
\newtheorem{remark}[theorem]{Remark}
\newenvironment{proof}{{\noindent\it Proof}\quad}{\hfill $\square$\par}
\numberwithin{equation}{section}
\begin{document}

\title{Fritz-John optimality condition in fuzzy optimization problems and its application to classification of fuzzy data}


\author{Fangfang Shi\textsuperscript{1}, Guoju Ye\textsuperscript{1} Wei Liu\textsuperscript{1} \and Debdas Ghosh\textsuperscript{2,*}}
\affilOne{\textsuperscript{1} College of Sciences, Hohai University, China\\}
\affilTwo{\textsuperscript{2} Department of Mathematical Sciences, Indian Institute of Technology (BHU) Varanasi, Uttar Pradesh, 221 005, India}


\twocolumn[{

\maketitle

\begin{abstract}
The main objective of this paper is to derive the optimality conditions for one type of fuzzy optimization problems. At the beginning, we define a cone of descent direction for fuzzy optimization, and prove that its intersection with the cone of feasible directions at an optimal point is an empty set. Then, we present first-order optimality conditions for fuzzy optimization problems. Furthermore, we generalize the Gordan's theorem for fuzzy linear inequality systems and utilize it to deduce the Fritz-John optimality condition for the fuzzy optimization with inequality constraints. Finally, we apply the optimality conditions established in this paper to a binary classification problem for support vector machines with fuzzy data. In the meantime, numerical examples are described to demonstrate the primary findings proposed in the present paper.
\end{abstract}

\msinfo{20 June 2023}{****}{****}

\keywords{Fuzzy optimization, Gordan's theorem, Fritz-John optimality condition, Fuzzy data classification.}
}]

\setcounter{page}{1}
\corres
\volnum{ }
\issuenum{ }
\monthyear{ }
\pgfirst{ }
\pglast{ }
\doinum{ }
\articleType{}


\markboth{Shi, Ye, Liu and Ghosh}{FJ condition for fuzzy optimization}

\section{Introduction}
Optimization is a branch of applied mathematics, which aims to find the maximum or minimum value of a function under constraints. Optimization problems inevitably arise in many realistic fields including machine learning, management science, economics, physics, mechanics, etc. In practical problems, as the data are often generated by measurement and estimation, it is accompanied by uncertainties. These uncertain data govern the objective function and constraints conditions for the model optimization problem, if the coefficients in the representation involving the function are expressed as real numbers, the results may be meaningless as errors accumulate. Fuzzy set theory proposed by Zadeh \cite{Zadeh} is a powerful tool to solve uncertain problems when uncertainty is given by imprecision. It regards uncertain parameters as fuzzy numbers. This method has been applied in many fields of practical optimization, such as financial investment \cite{Yousefi2018}, cooperative game \cite{Liang2020}, support vector machine (SVM) \cite{Zhang2008} and so on.

In fuzzy optimization, the differentiability of fuzzy functions is an important idea in establishing optimality conditions. Panigrahi\cite{Panigrahi2008} used the Buckley-Feuring method to obtain the differentiability of multi-variable fuzzy mappings and deduced the KKT conditions for the constrained fuzzy minimization problem.  However, the main drawback of this method is that the fuzzy derivative is degenerated to the real function derivative, which effectively reduces the fuzziness. Wu \cite{Wu2007} defined the Hukuhara difference (H-difference) of two fuzzy numbers, and derived the weak and strong duality theorems of Wolfe primal-dual problem by defining the gradient of fuzzy function. Chalco-Cano et al. \cite{Chalco-Cano2013} considered the notion of strongly generalized differentiable fuzzy functions and obtained the KKT optimality conditions for fuzzy optimization problems (FOPs). Wu \cite{Wu2009} established the concepts of level-wise continuity and level-wise differentiability of fuzzy functions, and gave the KKT condition where the objective function is level continuous and differentiable at feasible solutions. The derivatives used in these methods all depend on the H-difference, but the H-difference does not necessarily exist for any pair of fuzzy numbers. Bede and Stefanini therefore introduced a new derivative in \cite{Bede} known as the generalized Hukuhara derivative (gH-derivative), that is more widespread than Hukuhara differentiability, level-wise differentiability and strongly generalized differentiability. Najariyan and Farahi \cite{Najariyan2015} investigated  fuzzy optimal control systems with fuzzy linear boundary conditions by utilizing the gH-differentiability of fuzzy functions. Chalco-Cano et al. \cite{Chalco-Cano2015} presented a Newton method for the search of non-dominant solutions of FOPs by using gH-differentiability. For more articles on fuzzy optimization, interested readers can refer to \cite{Osuna2016,Luhandjula2015,Stefanini2019,Zhang20222,Ghosh2023,Ghosh2022ORT,Ghosh2022CAM,Ghosh2022SC,Guo} and the reference therein.

It can be seen from the existing literature that the authors are committed to transforming fuzzy functions into interval-valued functions by using cut sets, and then using two upper and lower endpoint real functions to study the optimization conditions, which is a natural generalization of real functions. This technique of converting the FOP into a conventionally optimization model delivers a solution to the problem, but it neglects the analyse the overall solution. Nehi and Daryab \cite{Nehi2012} studied the optimality conditions of fuzzy optimization problems through the descending direction cone, which is a meaningful work, but unfortunately, the derivative used in the article is defined by the endpoint value function. In 2019, Ghosh et al. \cite{Ghosh2019} analyzed from the perspective of geometry solutions to constrained and unconstrained interval-valued problems, and obtained KKT optimality results.
Inspired by these literatures, the main objective of this paper is to study the Fritz-John and KKT type necessary optimality conditions for one type of FOPs with fuzzy constraints, and then apply the optimality conditions obtained in this paper to the SVM binary classification problems of fuzzy data sets.

The remainder of this article is to be organized as follows. The preliminaries are introduced in Section \ref{sec:2}. In Section \ref{sec:3}, we first put forward a definition of the cone descent direction of the fuzzy function, and use it to show the first-order optimality conditions of unconstrained FOP at the optimal solution. Then, Gordan's alternative theorem is extended to solve the system of fuzzy inequalities. Based on this theorem, the necessary conditions for Fritz-John and KKT types optimality of constrained FOPs are established, and numerical examples to verify the accuracy of the results are also available. It is worth noting that the means used in the proof are not to convert FOPs into traditional optimization models. In Section \ref{sec:4}, the findings of this paper are employed in binary classification problems with fuzzy data points. The study ends in Section \ref{sec:5}, where conclusions are drawn.
\section{Preliminaries}
\label{sec:2}
The set of all bounded and closed intervals in $\mathbb{R}$ is written as $\mathbb{R}_{\mathcal{I}}$, i.e.,
$$\mathbb{R}_{\mathcal{I}}=\{[\underline{\ell}, \overline{\ell}]: \underline{\ell}, \overline{\ell}\in\mathbb{R}~\text{and}~\underline{\ell}\leq \overline{\ell}\}.$$

A fuzzy set on $\mathbb{R}^{n}$ is a mapping $m: \mathbb{R}^{n}\rightarrow[0, 1]$, and its $\varrho$-cut set is $[m]^{\varrho}= \{x\in \mathbb{R}^{n}~: m(x)\geq \varrho\}$ for every $\varrho\in(0, 1]$, and the $0$-cut set is $[m]^{0}=\overline{\{x\in \mathbb{R}^{n}~: m(x)> 0\}}$, here $\overline{\mathbb{T}}$ represents the closure of the set $\mathbb{T}\subseteq\mathbb{R}^{n}$.

\begin{definition}\cite{Zadeh}
A fuzzy set $m$ on $\mathbb{R}$ is said to be a fuzzy number if
\begin{enumerate}[label=(\roman*)]
\item $m$ is normal, i.e., there exists $x^{*}\in \mathbb{R}$ such that $m(x^{*})=1$;
\item $m$ is upper semi-continuous;
\item $m(\vartheta x_{1}+(1-\vartheta) x_{2})\geq \min\{m(x_{1}), m(x_{2})\},~\forall~x_{1}, x_{2}\in \mathbb{R}, \vartheta\in [0, 1]$;
\item $[m]^{0}$ is compact.
\end{enumerate}
\end{definition}
The set of all fuzzy numbers is written as $\mathcal{F}_{\mathcal{C}}$. Let $m\in\mathcal{F}_{\mathcal{C}}$, the $\varrho$-cuts of $m$ are given $[m]^{\varrho}=[\underline{m}^{\varrho}, \overline{m}^{\varrho}]\in\mathbb{R}_{\mathcal{I}}$, where $\underline{m}^{\varrho}, \overline{m}^{\varrho}\in \mathbb{R}$ for each $\varrho\in[0, 1]$. Note that every $\ell\in\mathbb{R}$ can be regarded as $\tilde{\ell}\in\mathcal{F}_{\mathcal{C}}$, i.e., $[\tilde{\ell}]^{\varrho}=[\ell, \ell]$ for each $\varrho\in[0, 1]$.

For example, for a triangular fuzzy number $m=\langle\ell_{1}, \ell_{2}, \ell_{3}\rangle\in\mathcal{F}_{\mathcal{C}}$, $\ell_{1}, \ell_{2}, \ell_{3}\in\mathbb{R}$ and $\ell_{1}\leq \ell_{2}\leq \ell_{3}$, its $\varrho$-cut set is $[m]^{\varrho}=[\ell_{1}+(\ell_{2}-\ell_{1})\varrho, \ell_{3}-(\ell_{3}-\ell_{2})\varrho], \varrho\in[0, 1].$

Given $m, l\in\mathcal{F}_{\mathcal{C}}$, the distance between $m$ and $l$ is
$$\mathfrak{D}(m, l)=\sup_{\varrho\in[0, 1]} \max\{\mid\underline{m}^{\varrho}-\underline{l}^{\varrho}\mid, \mid\overline{m}^{\varrho}-\overline{l}^{\varrho}\mid\}.$$
For any $m, l\in\mathcal{F}_{\mathcal{C}}$ represented by $[\underline{m}^{\varrho}, \overline{m}^{\varrho}]$ and $[\underline{l}^{\varrho}, \overline{l}^{\varrho}]$, respectively, and for every $\vartheta\in\mathbb{R}$,
\begin{equation*}
(m+l)(x)=\sup_{x=x_{1}+x_{2}}\min\{m(x_{1}), l(x_{2})\},
\end{equation*}
\begin{equation*}(\vartheta m)(x)=\begin{cases}
  m(\frac{x}{\vartheta}),&\text{if }\vartheta\neq0,\\
  0,&\text{if }\vartheta=0,
  \end{cases}
\end{equation*}
respectively. For every $\varrho\in[0, 1]$,
\begin{equation*}
[m+l]^{\varrho}=[\underline{m}^{\varrho}+\underline{l}^{\varrho}, \overline{m}^{\varrho}+\overline{l}^{\varrho}],
\end{equation*}
\begin{equation*}
[\vartheta m]^{\varrho}=[\min\{\vartheta\underline{m}^{\varrho}, \vartheta\overline{m}^{\varrho}\}, \max\{\vartheta\underline{m}^{\varrho}, \vartheta\overline{m}^{\varrho}\}].
\end{equation*}

\begin{definition}\cite{Stefanini}
For any $m, l\in\mathcal{F}_{\mathcal{C}}$, their gH-difference $\gamma\in\mathcal{F}_{\mathcal{C}}$, if it exists, is given by
\begin{equation*}
 m\ominus_{gH}l=\gamma\Leftrightarrow\begin{cases}
 (i)~m=l+\gamma,\\
 \text{or}~(ii)~l=m+(-1)\gamma.
  \end{cases}
\end{equation*}
If $m\ominus_{gH}l$ exists, then for all $\varrho\in[0, 1]$,
\begin{equation*}
[m\ominus_{gH}l]^{\varrho}=[m]^{\varrho}\ominus_{gH}[l]^{\varrho}=[\min\{\underline{m}^{\varrho}-\underline{l}^{\varrho}, \overline{m}^{\varrho}-\overline{l}^{\varrho}\}, \max\{\underline{m}^{\varrho}-\underline{l}^{\varrho}, \overline{m}^{\varrho}-\overline{l}^{\varrho}\}].
\end{equation*}
\end{definition}

\begin{definition}\cite{Wu2007}
Let $m, l\in\mathcal{F}_{\mathcal{C}}$ be such that $[m]^{\varrho}=[\underline{m}^{\varrho}, \overline{m}^{\varrho}]$ and $[l]^{\varrho}=[\underline{l}^{\varrho}, \overline{l}^{\varrho}]$ for all $\varrho\in[0, 1]$. Then, we write
\begin{equation*}
m\preceqq l ~\text{iff}~ [m]^{\varrho}\preceqq_{LU} [l]^{\varrho} ~\text{for all}~\varrho\in[0, 1],
\end{equation*}
which is equivalent to writing $\underline{m}^{\varrho}\leq\underline{l}^{\varrho}$ and $\overline{m}^{\varrho}\leq\overline{l}^{\varrho}$ for all $\varrho\in[0, 1]$. We write
\begin{equation*}
m\preceq l ~\text{iff}~[m]^{\varrho}\preceq_{LU} [l]^{\varrho},
\end{equation*}
which is equivalent to $[m]^{\varrho}\preceqq_{LU} [l]^{\varrho}$ for all $\varrho\in[0, 1]$ and  $\underline{m}^{\varrho^{*}}<\underline{l}^{\varrho^{*}}$ or $\overline{m}^{\varrho^{*}}<\overline{l}^{\varrho^{*}}$ for some $\varrho^{*}\in[0, 1]$. We write
\begin{equation*}
m\prec l ~\text{iff}~ [m]^{\varrho}\prec_{LU} [l]^{\varrho} ~\text{for all}~\varrho\in[0, 1],
\end{equation*}
which is equivalent to saying that $\underline{m}^{\varrho}<\underline{l}^{\varrho}$ and $\overline{m}^{\varrho}<\overline{l}^{\varrho}$ for all $\varrho\in[0, 1]$.
\end{definition}
\begin{lemma}\label{lem2.3}
For any $m, l\in\mathcal{F}_{\mathcal{C}}$, $m\preceqq l$ iff $m\ominus_{gH}l\preceqq ~\tilde{0}$.
\end{lemma}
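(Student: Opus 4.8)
The plan is to prove both implications simultaneously by reducing the fuzzy relation $\preceqq$ to a level-wise statement about real numbers, so that the entire argument becomes a short computation with the $\min$ and $\max$ appearing in the gH-difference formula. The only ingredients needed are the explicit $\varrho$-cut description of $m\ominus_{gH}l$ from Definition of the gH-difference, the cut representation $[\tilde{0}]^{\varrho}=[0,0]$ of the zero fuzzy number, and the characterization of $\preceqq$ in terms of lower and upper endpoints.

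First I would fix an arbitrary $\varrho\in[0,1]$ and write $a=\underline{m}^{\varrho}-\underline{l}^{\varrho}$ and $b=\overline{m}^{\varrho}-\overline{l}^{\varrho}$, so that $[m\ominus_{gH}l]^{\varrho}=[\min\{a,b\},\max\{a,b\}]$. By the definition of $\preceqq$ applied to $m\ominus_{gH}l$ and $\tilde{0}$, the statement $m\ominus_{gH}l\preceqq\tilde{0}$ is equivalent to requiring $\min\{a,b\}\leq 0$ and $\max\{a,b\}\leq 0$ for every $\varrho\in[0,1]$.

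The key step is the elementary observation that, for real numbers $a,b$, one has $\max\{a,b\}\leq 0$ if and only if both $a\leq 0$ and $b\leq 0$; moreover $\max\{a,b\}\leq 0$ forces $\min\{a,b\}\leq 0$ automatically. Hence the pair of endpoint conditions collapses to the single requirement $a\leq 0$ and $b\leq 0$, i.e. $\underline{m}^{\varrho}\leq\underline{l}^{\varrho}$ and $\overline{m}^{\varrho}\leq\overline{l}^{\varrho}$. Since $\varrho\in[0,1]$ was arbitrary, this is exactly the characterization of $m\preceqq l$ given in Definition, and because every passage above is an equivalence, both directions of the asserted ``iff'' follow at once.

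I do not expect any genuine obstacle here, as the result is a direct unwinding of definitions. The one point deserving a line of care is the redundancy of the lower-endpoint condition $\min\{a,b\}\leq 0$: a reader might initially suspect it imposes an extra constraint, so I would explicitly note that it is implied by $\max\{a,b\}\leq 0$ and therefore contributes nothing beyond the two endpoint inequalities. It is also worth recording that the cut formula for $m\ominus_{gH}l$ used throughout is precisely the one supplied in the definition of the gH-difference, so no additional existence hypothesis on the gH-difference is needed for the level-wise computation.
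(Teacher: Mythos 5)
Your proposal is correct and follows essentially the same route as the paper: both arguments fix an arbitrary $\varrho\in[0,1]$ and run a chain of level-wise equivalences between the endpoint inequalities $\underline{m}^{\varrho}\leq\underline{l}^{\varrho}$, $\overline{m}^{\varrho}\leq\overline{l}^{\varrho}$ and the condition $[m\ominus_{gH}l]^{\varrho}\preceqq_{LU}[0,0]$. The only difference is cosmetic: you spell out explicitly the $\min\{a,b\}/\max\{a,b\}$ observation that the paper passes over silently when it equates the endpoint inequalities with $[m]^{\varrho}\ominus_{gH}[l]^{\varrho}\preceqq_{LU}[0,0]$, which is a welcome clarification rather than a deviation.
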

\begin{proof}
Let $m, l\in\mathcal{F}_{\mathcal{C}}$, for any $\varrho\in[0, 1]$, we have
\begin{equation*}
\begin{split}
~m\preceqq l
\Leftrightarrow&~[m]^{\varrho}\preceqq_{LU} [l]^{\varrho}\\
\Leftrightarrow&~\underline{m}^{\varrho}\leq\underline{l}^{\varrho}~ \text{and} ~\overline{m}^{\varrho}\leq\overline{l}^{\varrho} \\
\Leftrightarrow&~\underline{m}^{\varrho}-\underline{l}^{\varrho}\leq0~ \text{and} ~\overline{m}^{\varrho}-\overline{l}^{\varrho}\leq0\\
\Leftrightarrow&~[m]^{\varrho}\ominus_{gH}[l]^{\varrho}\preceqq_{LU}[0, 0]\\
\Leftrightarrow&~[m\ominus_{gH}l]^{\varrho}\preceqq_{LU}[0, 0]\\
\Leftrightarrow&~m\ominus_{gH}l\preceqq \tilde{0}.
\end{split}
\end{equation*}
The proof is completed.
\end{proof}
\begin{remark}\label{rem2.4}
From Lemma \ref{lem2.3}, it is obvious that $m\preceq l$ iff $m\ominus_{gH}l\preceq \tilde{0}$, $m\prec l$ iff $m\ominus_{gH}l\prec \tilde{0}$.
\end{remark}

\begin{definition}\cite{Nehi2012}
The $\mathcal{U}=(m_{1}, m_{2}, \ldots, m_{n})^{\top}$ is said to be an $n$-dimensional fuzzy vector
if $m_{1}, m_{2}, \ldots, m_{n}\in\mathcal{F}_{\mathcal{C}}$. All $n$-dimensional fuzzy vectors are recorded as $\mathcal{F}_{\mathcal{C}}^{n}$.\\
The $\varrho$-cut set of $\mathcal{U}=(m_{1}, m_{2}, \ldots, m_{n})^{\top}\in\mathcal{F}_{\mathcal{C}}^{n}$ is defined as
$$\mathcal{U}_{\varrho}=([m_{1}]^{\varrho}, [m_{2}]^{\varrho}, \ldots, [m_{n}]^{\varrho})^{\top}.$$
For any $\mathcal{U}=(m_{1}, m_{2}, \ldots, m_{n})^{\top}\in\mathcal{F}_{\mathcal{C}}^{n}$ and $\vartheta\in\mathbb{R}$, we have
$$\vartheta\mathcal{U}=(\vartheta m_{1}, \vartheta m_{2}, \ldots, \vartheta m_{n})^{\top},$$
and for any $\kappa=(\kappa_{1}, \kappa_{2}, \ldots, \kappa_{n})^{\top}\in\mathbb{R}^{n}$, the product $\kappa^{\top}\mathcal{U}$ is defined as $\kappa^{\top}\mathcal{U}=\sum_{j=1}^{n}\kappa_{j}m_{j}.$

For $\mathcal{U}=(m_{1}, m_{2}, \cdots, m_{n})^{\top}, \mathcal{V}=(l_{1}, l_{2}, \cdots, l_{n})^{\top} \in\mathcal{F}_{\mathcal{C}}^{n}$, then
\begin{equation*}
\begin{split}
&\mathcal{U} \preceqq \mathcal{V}~\text{iff}~ m_{j}\preceqq l_{j},~~\forall~j=1, 2, \cdots, n,\\
&\mathcal{U}\preceq \mathcal{V}~\text{iff}~ m_{j}\preceq l_{j},~~\forall~j=1, 2, \cdots, n,\\
&\mathcal{U}\prec \mathcal{V}~\text{iff}~  m_{j}\prec l_{j},~~\forall~j=1, 2, \cdots, n.
\end{split}
\end{equation*}
\end{definition}
Unless otherwise specified, $\mathbb{T}$ is represented as a nonempty subset of $\mathbb{R}^{n}$.

Let $\mathcal{H}: \mathbb{T}\rightarrow\mathcal{F}_{\mathcal{C}}$ be a fuzzy function on $\mathbb{T}$. For every $\varrho\in[0, 1]$, $\mathcal{H}$ can be represented as the set of interval-valued
functions $\mathcal{H}_{\varrho}: \mathbb{T}\rightarrow\mathbb{R}_{\mathcal{I}}$, as indicated below
$$\mathcal{H}_{\varrho}(x)=[\mathcal{H}(x)]^{\varrho}=[\underline{\mathcal{H}}^{\varrho}(x), \overline{\mathcal{H}}^{\varrho}(x)].$$

\begin{definition}\cite{Stefanini2009}\label{def22}
Let $x^{*}\in \mathbb{T}$. The $\mathcal{H}: \mathbb{T}\rightarrow \mathcal{F}_{\mathcal{C}}$ ia said to be continuous at $x^{*}$ if for any $\epsilon>0$, there is a $\delta>0$ that makes
$$\mathfrak{D}(\mathcal{H}(x), \mathcal{H}(x^{*}))<\epsilon~~\text{as}~ 0<|x-x^{*}|<\delta.$$
\end{definition}

\begin{theorem}\label{jubu}
Let the fuzzy function $\mathcal{H}: \mathbb{T}\rightarrow\mathcal{F}_{\mathcal{C}}$ be continuous at $x^{*}\in\mathbb{T}$ and $\mathcal{H}(x^{*})\prec\tilde{0}$. Then, there exists a $\delta>0$ such that $\mathcal{H}(x)\prec\tilde{0}$ whenever $0<|x-x^{*}|<\delta$.
\end{theorem}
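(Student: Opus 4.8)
The plan is to translate the fuzzy inequality $\mathcal{H}(x^{*})\prec\tilde{0}$ into statements about the endpoint functions $\underline{\mathcal{H}}^{\varrho}$ and $\overline{\mathcal{H}}^{\varrho}$, and then to mimic the classical sign-preservation argument for continuous real functions, but carried out uniformly in the cut level $\varrho$. By the definition of $\prec$, the hypothesis $\mathcal{H}(x^{*})\prec\tilde{0}$ means precisely that $\underline{\mathcal{H}}^{\varrho}(x^{*})<0$ and $\overline{\mathcal{H}}^{\varrho}(x^{*})<0$ for every $\varrho\in[0,1]$, and the conclusion $\mathcal{H}(x)\prec\tilde{0}$ is the same pair of strict inequalities with $x^{*}$ replaced by $x$. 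Hence it suffices to produce a single $\delta>0$ forcing both endpoint functions to stay negative for all $\varrho$ simultaneously.

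First I would secure a uniform negative bound at $x^{*}$. Since $[m]^{\varrho}=\{x:m(x)\geq\varrho\}$, the cuts of a fuzzy number are nested (a higher threshold gives a smaller set), so $\underline{\mathcal{H}}^{\varrho}(x^{*})$ is nondecreasing and $\overline{\mathcal{H}}^{\varrho}(x^{*})$ is nonincreasing in $\varrho$. Consequently
\[
\sup_{\varrho\in[0,1]}\underline{\mathcal{H}}^{\varrho}(x^{*})=\underline{\mathcal{H}}^{1}(x^{*})<0,\qquad \sup_{\varrho\in[0,1]}\overline{\mathcal{H}}^{\varrho}(x^{*})=\overline{\mathcal{H}}^{0}(x^{*})<0,
\]
where the strict negativity comes from evaluating the hypothesis at $\varrho=1$ and $\varrho=0$, respectively. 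Setting $\beta=\min\{-\underline{\mathcal{H}}^{1}(x^{*}),\,-\overline{\mathcal{H}}^{0}(x^{*})\}$ I obtain $\beta>0$ together with $\underline{\mathcal{H}}^{\varrho}(x^{*})\leq-\beta$ and $\overline{\mathcal{H}}^{\varrho}(x^{*})\leq-\beta$ for every $\varrho\in[0,1]$.

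Next I would invoke continuity (Definition~\ref{def22}) with $\epsilon=\beta$: there is a $\delta>0$ such that $\mathfrak{D}(\mathcal{H}(x),\mathcal{H}(x^{*}))<\beta$ whenever $0<|x-x^{*}|<\delta$. Unwinding the definition of $\mathfrak{D}$, this supremum bound yields, for every $\varrho\in[0,1]$, both $|\underline{\mathcal{H}}^{\varrho}(x)-\underline{\mathcal{H}}^{\varrho}(x^{*})|<\beta$ and $|\overline{\mathcal{H}}^{\varrho}(x)-\overline{\mathcal{H}}^{\varrho}(x^{*})|<\beta$. Combining with the bounds from the previous step gives $\underline{\mathcal{H}}^{\varrho}(x)<\underline{\mathcal{H}}^{\varrho}(x^{*})+\beta\leq 0$ and $\overline{\mathcal{H}}^{\varrho}(x)<\overline{\mathcal{H}}^{\varrho}(x^{*})+\beta\leq 0$ for all $\varrho$, which is exactly $\mathcal{H}(x)\prec\tilde{0}$.

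The one delicate point, and the main obstacle, is the passage from pointwise strictness ($\underline{\mathcal{H}}^{\varrho}(x^{*})<0$ for each fixed $\varrho$) to a uniform gap ($\sup_{\varrho}\underline{\mathcal{H}}^{\varrho}(x^{*})<0$, and similarly for the upper endpoint). A naive argument that selects a separate $\delta_{\varrho}$ for each level fails to deliver a single $\delta$ valid across all $\varrho$, since the metric $\mathfrak{D}$ controls the endpoints uniformly in $\varrho$ while the hypothesis only asserts negativity levelwise. The monotonicity of the cut endpoints is precisely what closes this gap, collapsing the supremum over $[0,1]$ to the single values at $\varrho=1$ and $\varrho=0$; once that is established, the remainder is the standard $\epsilon$–$\delta$ sign-preservation estimate.
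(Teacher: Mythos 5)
Your proof is correct, and it follows the same overall skeleton as the paper's: translate $\prec$ into statements about the endpoint functions $\underline{\mathcal{H}}^{\varrho}$, $\overline{\mathcal{H}}^{\varrho}$, then exploit the fact that the metric $\mathfrak{D}$ controls both endpoints uniformly in $\varrho$. The difference lies in how the passage from levelwise negativity at $x^{*}$ to a single $\delta$ valid for all $\varrho$ is handled. The paper deduces from $\mathfrak{D}$-continuity that each $\underline{\mathcal{H}}^{\varrho}$ and $\overline{\mathcal{H}}^{\varrho}$ is continuous at $x^{*}$, and then invokes the local sign-preservation property of real continuous functions ``for each $\varrho\in[0,1]$'', asserting that one $\delta\leq\delta'$ works for all levels; as written, that step only produces a level-dependent $\delta_{\varrho}$, since the amount of negativity at $x^{*}$ could a priori vary with $\varrho$, and the uniformity is not justified. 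Your proof supplies exactly the missing ingredient: by nestedness of the cuts, $\underline{\mathcal{H}}^{\varrho}(x^{*})$ is nondecreasing and $\overline{\mathcal{H}}^{\varrho}(x^{*})$ is nonincreasing in $\varrho$, so the suprema collapse to $\underline{\mathcal{H}}^{1}(x^{*})$ and $\overline{\mathcal{H}}^{0}(x^{*})$, giving a uniform gap $\beta>0$; a single application of $\mathfrak{D}$-continuity with $\epsilon=\beta$ then yields one $\delta$ for all levels simultaneously. In short, your route is slightly longer but closes a genuine gap that the paper's proof glosses over: the paper's argument buys brevity, while yours buys rigor, and the paper's proof would need precisely your monotonicity observation (or an equivalent uniform negative bound at $x^{*}$) to be complete.
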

\begin{proof}
It follows from the assumption that for any $\epsilon>0$, there is $\delta'>0$ as to make
$$\mathfrak{D}(\mathcal{H}(x), \mathcal{H}(x^{*}))<\epsilon ~~~\text{as}~~~ 0<|x-x^{*}|<\delta'.$$
Then,
\begin{equation*}
\mid\mathcal{\underline{H}}^{\varrho}(x)-\mathcal{\underline{H}}^{\varrho}(x^{*})\mid<\epsilon ~\text{and}~ \mid\mathcal{\overline{H}}^{\varrho}(x)-\mathcal{\overline{H}}^{\varrho}(x^{*})\mid<\epsilon ~\text{for all}~\varrho\in[0, 1].
\end{equation*}
So, $\mathcal{\underline{H}}^{\varrho}$ and $\mathcal{\overline{H}}^{\varrho}$ are continuous at $x^{*}$ for any $\varrho\in[0, 1]$.\\
From $\mathcal{H}(x^{*})\prec\tilde{0}$, we get
$$\underline{\mathcal{H}}^{\varrho}(x^{*})<0~\text{and}~\overline{\mathcal{H}}^{\varrho}(x^{*})<0~\text{for~all}~\varrho\in[0, 1].$$
Then, from the local sign preserving property of real continuous functions, there is a $\delta>0$ ($\delta\leq \delta'$) that make
\begin{equation*}
\mathcal{\underline{H}}^{\varrho}(x)<0 ~\text{and}~ \mathcal{\overline{H}}^{\varrho}(x)<0 ~\text{for each}~\varrho\in[0, 1]
\end{equation*}
as $0<|x-x^{*}|<\delta$. Then $\mathcal{H}_{\varrho}(x)\prec_{LU}[0, 0]$ for every $\varrho\in[0, 1]$, i.e., $\mathcal{H}(x)\prec\tilde{0}$.
\end{proof}

\begin{definition}\cite{Chalco-Cano}
We call the fuzzy function $\mathcal{H}$ convex on $\mathbb{T}$ if for each $x_{1}, x_{2}\in\mathbb{T}$ and all $\vartheta\in(0, 1)$,
\begin{equation*}
\mathcal{H}(\vartheta x_{1}+(1-\vartheta)x_{2})\preceq\vartheta \mathcal{H}(x_{1})+(1-\vartheta)\mathcal{H}(x_{2}).
\end{equation*}
\end{definition}

\begin{definition}\cite{Bede}
Let $\mathcal{H}: \mathbb{T}\subseteq \mathbb{R}\rightarrow\mathcal{F}_{\mathcal{C}}$ be a fuzzy function, $x^{*}\in \mathbb{T}$ and $d\in\mathbb{R}$ be such that $x^{*}+d\in \mathbb{T}$. Then, $\mathcal{H}$ is said to have gH-differentiable at $x^{*}$ if there exists $\mathcal{H}'(x^{*})\in\mathcal{F}_{\mathcal{C}}$ that make
\begin{equation*}\label{gH-d}
\mathcal{H}'(x^{*})=\lim_{d\rightarrow0}\frac{1}{d}(\mathcal{H}(x^{*}+d)\ominus_{gH}\mathcal{H}(x^{*})).
\end{equation*}
Moreover, the interval-valued function $\mathcal{H}_{\varrho}: \mathbb{T}\rightarrow\mathbb{R}_{\mathcal{I}}$ be gH-differentiable for every $\varrho\in[0, 1]$ and $[\mathcal{H}'(x)]^{\varrho}=\mathcal{H}_{\varrho}'(x)$.
\end{definition}

\begin{definition}\cite{Chalco-Cano2015}
Let $\mathcal{H}: \mathbb{T}\rightarrow\mathcal{F}_{\mathcal{C}}$ be a fuzzy function and $x^{*}=(x^{*}_{1}, x^{*}_{2}, \ldots, x^{*}_{n})^{\top}\in\mathbb{T}$ and let $\mathcal{P}_{i}(x_{i})=\mathcal{H}(x^{*}_{1}, \ldots, x^{*}_{i-1}, x_{i},$\\$ x^{*}_{i+1}, \ldots, x^{*}_{n})$. If $\mathcal{P}_{i}$ has gH-differentiable at $x_{i}^{*}$, then we
call $\mathcal{H}$ has the $i$th partial gH-differentiable at $x^{*}$ (written as $D_{i}\mathcal{H}(x^{*})$ and $D_{i}\mathcal{H}(x^{*})=\mathcal{P}_{i}'(x_{i}^{*})$).

The gradient $\nabla \mathcal{H}(x^{*})$ of $\mathcal{H}$ at $x^{*}$ is
\begin{equation*}
\nabla \mathcal{H}(x^{*})=\left(D_{1}\mathcal{H}(x^{*}), D_{2}\mathcal{H}(x^{*}), \ldots, D_{n}\mathcal{H}(x^{*})\right)^{\top}.
\end{equation*}
\end{definition}
\begin{definition}\cite{Stefanini2019}\label{def2.21}
Let $x^{*}\in\mathcal{X}$ and $\tau\in\mathbb{R}^{n}$. The fuzzy function $\mathcal{H}$ is called gH-directional differentiable at $x^{*}$ along the direction $\tau$, if there is a $\mathcal{H}_{\mathcal{D}}(x^{*})(\tau)\in\mathrm{E}$ that make
$$\lim_{\kappa\rightarrow0^{+}}\frac{1}{\kappa}(\mathcal{H}(x^{*}+\kappa \tau)\ominus_{gH}\mathcal{H}(x^{*}))=\mathcal{H}_{\mathcal{D}}(x^{*})(\tau).$$
\end{definition}
\begin{lemma}\cite{Stefanini2019}\label{lemm2.22}
Let fuzzy function $\mathcal{H}$ be gH-differentiable at $x^{*}\in\mathcal{X}$, then $\mathcal{H}_{\mathcal{D}}(x^{*})(\tau)=\tau^{\top}\nabla \mathcal{H}(x^{*})$ for any $\tau\in\mathbb{R}^{n}$.
\end{lemma}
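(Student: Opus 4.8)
The plan is to evaluate the one-sided limit in Definition \ref{def2.21} directly, exploiting the fact that gH-differentiability at $x^{*}$ does more than guarantee the existence of the partial derivatives $D_{j}\mathcal{H}(x^{*})$: it supplies a genuine first-order expansion of the gH-increment. Concretely, the hypothesis provides that $\nabla\mathcal{H}(x^{*})$ approximates the increment to first order, in the sense that the quantity $\mathfrak{D}\big(\mathcal{H}(x^{*}+h)\ominus_{gH}\mathcal{H}(x^{*}),\, h^{\top}\nabla\mathcal{H}(x^{*})\big)$ is negligible compared with $\|h\|$ as $h\to 0$. Granting this expansion, the directional formula will drop out by positive homogeneity along the ray $\kappa\mapsto x^{*}+\kappa\tau$.

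The execution is to set $h=\kappa\tau$ with $\kappa>0$ and rescale. From the definition of $\kappa^{\top}\mathcal{U}$ one has $(\kappa\tau)^{\top}\nabla\mathcal{H}(x^{*})=\kappa\,(\tau^{\top}\nabla\mathcal{H}(x^{*}))$, while the level formulas defining $\ominus_{gH}$ and $\mathfrak{D}$ are positively homogeneous, so that $\mathfrak{D}(\kappa a,\kappa b)=\kappa\,\mathfrak{D}(a,b)$ for $\kappa>0$. Combining these,
\[
\mathfrak{D}\!\left(\tfrac{1}{\kappa}\big(\mathcal{H}(x^{*}+\kappa\tau)\ominus_{gH}\mathcal{H}(x^{*})\big),\ \tau^{\top}\nabla\mathcal{H}(x^{*})\right)
=\tfrac{1}{\kappa}\,\mathfrak{D}\!\left(\mathcal{H}(x^{*}+\kappa\tau)\ominus_{gH}\mathcal{H}(x^{*}),\ (\kappa\tau)^{\top}\nabla\mathcal{H}(x^{*})\right),
\]
and since $\|h\|=\kappa\|\tau\|$ the right-hand side equals $\|\tau\|$ times $\mathfrak{D}(\,\cdot\,)/\|h\|$, which tends to $0$ as $\kappa\to 0^{+}$ by the first-order expansion. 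By Definition \ref{def2.21} the gH-directional derivative therefore exists and equals $\tau^{\top}\nabla\mathcal{H}(x^{*})$.

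A complementary level-wise route is worth carrying out as a check, and it also pinpoints where the difficulty lies. For each $\varrho$, gH-differentiability renders the endpoint functions $\underline{\mathcal{H}}^{\varrho}$ and $\overline{\mathcal{H}}^{\varrho}$ classically differentiable at $x^{*}$; applying the level formula for $\ominus_{gH}$, pulling the positive factor $1/\kappa$ through the $\min$ and $\max$, and using the classical identity that the directional derivative of a differentiable real function is the inner product of its gradient with the direction, the $\varrho$-cut of the difference quotient converges to $[\min\{A,B\},\max\{A,B\}]$ with $A=\nabla\underline{\mathcal{H}}^{\varrho}(x^{*})^{\top}\tau$ and $B=\nabla\overline{\mathcal{H}}^{\varrho}(x^{*})^{\top}\tau$. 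The hard part appears when matching this with the $\varrho$-cut of $\sum_{j}\tau_{j}D_{j}\mathcal{H}(x^{*})$: because fuzzy addition combines lower and upper endpoints separately, that $\varrho$-cut is $\big[\sum_{j}\min\{\tau_{j}u_{j},\tau_{j}v_{j}\},\ \sum_{j}\max\{\tau_{j}u_{j},\tau_{j}v_{j}\}\big]$ with $u_{j}=\partial_{j}\underline{\mathcal{H}}^{\varrho}(x^{*})$ and $v_{j}=\partial_{j}\overline{\mathcal{H}}^{\varrho}(x^{*})$, and in general $\sum_{j}\min\{\tau_{j}u_{j},\tau_{j}v_{j}\}\le\min\{A,B\}$ with strict inequality possible. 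The two expressions coincide only because full (total) gH-differentiability — not the mere existence of the partials $D_{j}\mathcal{H}(x^{*})$ — forces the $\min$ and $\max$ to be attained consistently across the coordinates $j$, equivalently forces $\nabla\mathcal{H}(x^{*})$ to act as a genuine first-order approximant of the gH-increment. Establishing and invoking this consistency is the crux of the matter, and it is exactly what the homogeneity argument of the second paragraph uses implicitly, which is why I would present that argument as the primary proof and keep the level-wise computation only as corroboration.
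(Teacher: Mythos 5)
The paper contains no proof for you to be compared against: Lemma \ref{lemm2.22} is imported verbatim from \cite{Stefanini2019}, so your proposal can only be judged on its own merits and against that source. On those terms it is essentially correct, and it is the standard argument: take gH-differentiability at $x^{*}$ to mean the first-order expansion $\mathfrak{D}\big(\mathcal{H}(x^{*}+h)\ominus_{gH}\mathcal{H}(x^{*}),\ h^{\top}\nabla\mathcal{H}(x^{*})\big)=o(\|h\|)$, restrict to the ray $h=\kappa\tau$, and use positive homogeneity of $\mathfrak{D}$, of the gH-difference, and of scalar multiplication over fuzzy addition (indeed $\kappa(m+l)=\kappa m+\kappa l$ for fuzzy numbers, so $(\kappa\tau)^{\top}\nabla\mathcal{H}(x^{*})=\kappa\,\big(\tau^{\top}\nabla\mathcal{H}(x^{*})\big)$ is legitimate). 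Your level-wise second paragraph is also the correct diagnosis of where the difficulty sits: fuzzy addition takes $\min$ and $\max$ coordinate by coordinate, while the directional difference quotient takes them only once, after summation.

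The one point you should make explicit, rather than leave as something the hypothesis ``supplies,'' is that the entire proof hinges on which notion of multivariable gH-differentiability is assumed --- and the present paper never defines one; it only defines partial gH-derivatives and assembles them into $\nabla\mathcal{H}$. Under the weakest reading (``all partial gH-derivatives exist''), the lemma is simply false. For instance, take $\mathcal{H}(x_{1},x_{2})=(x_{1}-x_{2})\,C$ with $C$ the crisp interval $[-1,1]$ regarded as a fuzzy number, and $x^{*}=(0,0)$. Then $D_{1}\mathcal{H}(x^{*})=D_{2}\mathcal{H}(x^{*})=C$, so for $\tau=(1,1)^{\top}$ one gets $\tau^{\top}\nabla\mathcal{H}(x^{*})=[-2,2]$, whereas $\mathcal{H}(\kappa,\kappa)=\tilde{0}$ for all $\kappa$ gives $\mathcal{H}_{\mathcal{D}}(x^{*})(\tau)=\tilde{0}$. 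So the expansion you ``grant'' is not a convenience but the actual content of the hypothesis (it is how total gH-differentiability is defined in \cite{Stefanini2019}), and your proof is complete only once that definition is stated. A minor residual gap: if the source's definition posits the expansion with respect to some fuzzy vector $G$ rather than with respect to $\nabla\mathcal{H}(x^{*})$ itself, you must additionally check $G_{j}=D_{j}\mathcal{H}(x^{*})$; this follows from your same homogeneity argument applied along the coordinate rays $h=t e_{j}$, so it costs one extra line, but it should be said.
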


\begin{theorem}
Let fuzzy function $\mathcal{H}: \mathbb{T}\rightarrow\mathcal{F}_{\mathcal{C}}$ be gH-differentiable on $\mathbb{T}$. If $\mathcal{H}$ is convex, then
\begin{equation*}
(x_{2}-x_{1})^{\top}\nabla \mathcal{H}(x_{1})\preceq \mathcal{H}(x_{2})\ominus_{gH}\mathcal{H}(x_{1}) ~\text{for all}~x_{1}, x_{2}\in\mathbb{T}.
\end{equation*}
\end{theorem}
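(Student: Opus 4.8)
The plan is to reduce the fuzzy gradient inequality to a pair of classical first-order convexity inequalities for the real endpoint functions $\underline{\mathcal{H}}^{\varrho}$ and $\overline{\mathcal{H}}^{\varrho}$, and then to reassemble these through the cut formulas for the gH-difference and the gH-directional derivative. Fix $x_{1}, x_{2}\in\mathbb{T}$, set $\tau=x_{2}-x_{1}$, and take any $\varrho\in[0,1]$. For $\kappa\in(0,1)$ I would first write $x_{1}+\kappa\tau=\kappa x_{2}+(1-\kappa)x_{1}$ and apply the convexity definition, reading $\mathcal{H}(x_{1}+\kappa\tau)\preceq\kappa\mathcal{H}(x_{2})+(1-\kappa)\mathcal{H}(x_{1})$ level-wise. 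Since $\kappa, 1-\kappa>0$, the scalar-multiplication cut formula yields the two scalar inequalities
\begin{align*}
\underline{\mathcal{H}}^{\varrho}(x_{1}+\kappa\tau)&\leq\kappa\underline{\mathcal{H}}^{\varrho}(x_{2})+(1-\kappa)\underline{\mathcal{H}}^{\varrho}(x_{1}),\\
\overline{\mathcal{H}}^{\varrho}(x_{1}+\kappa\tau)&\leq\kappa\overline{\mathcal{H}}^{\varrho}(x_{2})+(1-\kappa)\overline{\mathcal{H}}^{\varrho}(x_{1}).
\end{align*}

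Next I would subtract $\underline{\mathcal{H}}^{\varrho}(x_{1})$ (respectively $\overline{\mathcal{H}}^{\varrho}(x_{1})$), divide by $\kappa>0$, and let $\kappa\to0^{+}$. The gH-differentiability of $\mathcal{H}$ guarantees that the endpoint difference quotients converge, and Lemma \ref{lemm2.22} identifies the limits with $\tau^{\top}\nabla\underline{\mathcal{H}}^{\varrho}(x_{1})$ and $\tau^{\top}\nabla\overline{\mathcal{H}}^{\varrho}(x_{1})$. Writing $\underline{a}:=\tau^{\top}\nabla\underline{\mathcal{H}}^{\varrho}(x_{1})$, $\overline{a}:=\tau^{\top}\nabla\overline{\mathcal{H}}^{\varrho}(x_{1})$, $\underline{b}:=\underline{\mathcal{H}}^{\varrho}(x_{2})-\underline{\mathcal{H}}^{\varrho}(x_{1})$ and $\overline{b}:=\overline{\mathcal{H}}^{\varrho}(x_{2})-\overline{\mathcal{H}}^{\varrho}(x_{1})$, the passage to the limit delivers the two scalar gradient inequalities $\underline{a}\leq\underline{b}$ and $\overline{a}\leq\overline{b}$.

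To conclude I would compute the $\varrho$-cuts of the two fuzzy quantities appearing in the statement. By the gH-difference cut formula, $[\mathcal{H}(x_{2})\ominus_{gH}\mathcal{H}(x_{1})]^{\varrho}=[\min\{\underline{b},\overline{b}\},\max\{\underline{b},\overline{b}\}]$, while passing the limit $\kappa\to0^{+}$ through the continuous $\min$ and $\max$ in the gH-difference quotient for the directional derivative shows $[\tau^{\top}\nabla\mathcal{H}(x_{1})]^{\varrho}=[\min\{\underline{a},\overline{a}\},\max\{\underline{a},\overline{a}\}]$. From $\underline{a}\leq\underline{b}$ and $\overline{a}\leq\overline{b}$ the monotonicity of $\min$ and $\max$ gives $\min\{\underline{a},\overline{a}\}\leq\min\{\underline{b},\overline{b}\}$ and $\max\{\underline{a},\overline{a}\}\leq\max\{\underline{b},\overline{b}\}$, that is $[\tau^{\top}\nabla\mathcal{H}(x_{1})]^{\varrho}\preceqq_{LU}[\mathcal{H}(x_{2})\ominus_{gH}\mathcal{H}(x_{1})]^{\varrho}$; since $\varrho\in[0,1]$ was arbitrary, this is the claimed relation.

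The step I expect to be the main obstacle is the last one: handling the $\min/\max$ that the gH-difference and the gH-directional derivative insert around the endpoint data. One must justify that the limit commutes with $\min$ and $\max$ in the difference quotient, so that the cut of $\tau^{\top}\nabla\mathcal{H}(x_{1})$ really is $[\min\{\underline{a},\overline{a}\},\max\{\underline{a},\overline{a}\}]$, and then verify the elementary but essential monotonicity of $\min$ and $\max$ that lets the two endpoint inequalities survive the reordering; the level-wise differentiability encoded in the gH-derivative is precisely what makes the passage to the limit legitimate. As an alternative that sidesteps the explicit limit, one may instead invoke the classical first-order characterization of convexity applied to the convex differentiable endpoint functions $\underline{\mathcal{H}}^{\varrho}$ and $\overline{\mathcal{H}}^{\varrho}$ to obtain $\underline{a}\leq\underline{b}$ and $\overline{a}\leq\overline{b}$ directly.
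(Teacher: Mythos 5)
The decisive step of your argument is unsupported, and the principle you invoke for it is false in general. You claim that ``the gH-differentiability of $\mathcal{H}$ guarantees that the endpoint difference quotients converge'' and that Lemma \ref{lemm2.22} identifies their limits with $\tau^{\top}\nabla\underline{\mathcal{H}}^{\varrho}(x_{1})$ and $\tau^{\top}\nabla\overline{\mathcal{H}}^{\varrho}(x_{1})$. Neither is true: gH-differentiability of a fuzzy function does \emph{not} imply differentiability of the endpoint functions $\underline{\mathcal{H}}^{\varrho},\overline{\mathcal{H}}^{\varrho}$ (nor even convergence of their difference quotients); it is strictly weaker than level-wise differentiability, which is precisely why the paper adopts it. For instance, the fuzzy function whose $\varrho$-cuts are $[(\varrho-1)|x|,(1-\varrho)|x|]$ is gH-differentiable at $0$ with gH-derivative of cuts $[\varrho-1,1-\varrho]$, yet neither endpoint function is differentiable there (this example is not convex, so it does not contradict the theorem, but it does refute the justification you rely on). Moreover, Lemma \ref{lemm2.22} concerns only the fuzzy directional derivative $\mathcal{H}_{\mathcal{D}}(x^{*})(\tau)$ and the fuzzy gradient $\nabla\mathcal{H}(x^{*})$; it says nothing about gradients of endpoint functions, which may fail to exist. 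Your fallback (``the convex differentiable endpoint functions'') assumes the same thing, and your closing remark that level-wise differentiability is ``encoded in the gH-derivative'' inverts the actual implication. So the quantities $\underline{a},\overline{a}$ on which your whole final step rests are not known to exist from the hypotheses as you use them.

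The gap is repairable, and it is instructive to compare with the paper, which never decomposes into endpoints: from convexity and Lemma \ref{lem2.3}/Remark \ref{rem2.4} it derives the fuzzy inequality $\frac{1}{\kappa}\left(\mathcal{H}(x_{1}+\kappa\tau)\ominus_{gH}\mathcal{H}(x_{1})\right)\preceq\mathcal{H}(x_{2})\ominus_{gH}\mathcal{H}(x_{1})$ for small $\kappa>0$, and then lets $\kappa\to0^{+}$, where Definition \ref{def2.21} and Lemma \ref{lemm2.22} identify the limit of the left side as $\tau^{\top}\nabla\mathcal{H}(x_{1})$; no endpoint derivative is ever mentioned. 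Alternatively, your endpoint route can be saved by sourcing the convergence from \emph{convexity} rather than gH-differentiability: difference quotients of the convex real functions $\underline{\mathcal{H}}^{\varrho},\overline{\mathcal{H}}^{\varrho}$ are nondecreasing in $\kappa$, so the one-sided directional derivatives $\underline{a},\overline{a}$ exist as limits; your inequalities $\underline{a}\leq\underline{b}$ and $\overline{a}\leq\overline{b}$ survive the limit, and gH-differentiability (via Definition \ref{def2.21}, Lemma \ref{lemm2.22}, and the fact that convergence in $\mathfrak{D}$ forces convergence of the cut endpoints) then yields $[\tau^{\top}\nabla\mathcal{H}(x_{1})]^{\varrho}=[\min\{\underline{a},\overline{a}\},\max\{\underline{a},\overline{a}\}]$, after which your $\min/\max$ monotonicity step is exactly right. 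One final caveat, shared with the paper's own proof: passing to the limit can only deliver the non-strict relation $\preceqq$ (the strict part of $\preceq$ need not survive limits), so both arguments really establish the conclusion with $\preceqq$; you are not worse off than the paper on that point.
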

\begin{proof}
Let $\mathcal{H}$ be convex on $\mathbb{T}$. For any $x_{1}, x_{2}\in\mathbb{T}$ and $\vartheta\in(0, 1)$,
\begin{equation*}
\begin{split}
\mathcal{H}(x_{1}+\vartheta (x_{2}-x_{1}))&=\mathcal{H}((1-\vartheta)x_{1}+\vartheta x_{2})\\
&\preceq(1-\vartheta)\mathcal{H}(x_{1})+\vartheta \mathcal{H}(x_{2}).
\end{split}
\end{equation*}
Therefore, by Lemma \ref{lem2.3},
\begin{equation*}
\begin{split}
&\mathcal{H}(x_{1}+\vartheta (x_{2}-x_{1}))\ominus_{gH}\mathcal{H}(x_{1})\preceq \vartheta (\mathcal{H}(x_{2})\ominus_{gH}\mathcal{H}(x_{1})),\\
\text{or,}~&\frac{1}{\vartheta}(\mathcal{H}(x_{1}+\vartheta (x_{2}-x_{1}))\ominus_{gH}\mathcal{H}(x_{1}))\preceq \mathcal{H}(x_{2})\ominus_{gH}\mathcal{H}(x_{1}),\\
\text{or,}~&\frac{1}{\vartheta}(\mathcal{H}(x_{1}+\vartheta (x_{2}-x_{1}))\ominus_{gH}\mathcal{H}(x_{1}))\preceq \mathcal{H}(x_{2})\ominus_{gH}\mathcal{H}(x_{1}).
\end{split}
\end{equation*}
Hence, as $\vartheta\rightarrow0^{+}$, we get
\begin{equation*}
(x_{2}-x_{1})^{\top}\nabla \mathcal{H}(x_{1})\preceq \mathcal{H}(x_{2})\ominus_{gH}\mathcal{H}(x_{1}).
\end{equation*}
\end{proof}

\section{Fritz-John optimality conditions}
\label{sec:3}
In this section, we will investigate the optimality conditions of FOPs at the optimal solution from the perspective of geometry.
\begin{theorem}\label{thm3.1}
Let $\mathcal{H}: \mathbb{T}\rightarrow\mathcal{F}_{\mathcal{C}}$ be gH-differentiable at $x^{*}\in\mathbb{T}$. If $\tau^{\top}\nabla \mathcal{H}(x^{*})\prec\tilde{0}$ for some $\tau\in\mathbb{R}^{n}$, then there is $\delta>0$ that makes for each $\kappa\in(0, \delta)$,
\begin{equation*}
\mathcal{H}(x^{*}+\kappa \tau)\prec \mathcal{H}(x^{*}).
\end{equation*}
\end{theorem}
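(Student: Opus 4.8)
The plan is to reduce the statement to the local sign-preserving property already established in Theorem \ref{jubu}. First I would invoke Lemma \ref{lemm2.22}: since $\mathcal{H}$ is gH-differentiable at $x^{*}$, it is gH-directional differentiable there and $\mathcal{H}_{\mathcal{D}}(x^{*})(\tau)=\tau^{\top}\nabla\mathcal{H}(x^{*})$, so the hypothesis reads $\mathcal{H}_{\mathcal{D}}(x^{*})(\tau)\prec\tilde{0}$. Unwinding Definition \ref{def2.21}, this says that $\frac{1}{\kappa}\big(\mathcal{H}(x^{*}+\kappa\tau)\ominus_{gH}\mathcal{H}(x^{*})\big)\to\tau^{\top}\nabla\mathcal{H}(x^{*})$ in the metric $\mathfrak{D}$ as $\kappa\to0^{+}$.

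The key device is to package this limit as a continuous fuzzy function of the step length. I would define $\mathcal{G}$ on a right-neighborhood $[0,\delta_{0})$ of $0$ by $\mathcal{G}(\kappa)=\frac{1}{\kappa}\big(\mathcal{H}(x^{*}+\kappa\tau)\ominus_{gH}\mathcal{H}(x^{*})\big)$ for $\kappa\in(0,\delta_{0})$ and $\mathcal{G}(0)=\tau^{\top}\nabla\mathcal{H}(x^{*})$. By the previous paragraph $\mathcal{G}$ is continuous at $0$ (from the right) in the sense of Definition \ref{def22}, and $\mathcal{G}(0)\prec\tilde{0}$. Applying Theorem \ref{jubu} in its evident one-sided form, with $\kappa$ in place of $x$ and $0$ in place of $x^{*}$, then yields a $\delta\in(0,\delta_{0})$ such that $\mathcal{G}(\kappa)\prec\tilde{0}$ for every $\kappa\in(0,\delta)$.

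It remains to strip off the positive factor $\frac{1}{\kappa}$ and translate back. Since $\kappa>0$, the scaling rule $[\frac{1}{\kappa}m]^{\varrho}=[\frac{1}{\kappa}\underline{m}^{\varrho},\frac{1}{\kappa}\overline{m}^{\varrho}]$ shows that $\mathcal{G}(\kappa)\prec\tilde{0}$ is equivalent to $\mathcal{H}(x^{*}+\kappa\tau)\ominus_{gH}\mathcal{H}(x^{*})\prec\tilde{0}$, because dividing each endpoint by the positive number $\kappa$ does not alter its sign. Finally, Remark \ref{rem2.4} converts this into $\mathcal{H}(x^{*}+\kappa\tau)\prec\mathcal{H}(x^{*})$ for all $\kappa\in(0,\delta)$, which is the desired conclusion.

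I expect the main obstacle to be the uniformity of $\delta$ over the index $\varrho\in[0,1]$. A naive argument working cut-by-cut would only produce, for each fixed $\varrho$, a threshold $\delta(\varrho)$ below which both endpoint differences $\underline{\mathcal{H}}^{\varrho}(x^{*}+\kappa\tau)-\underline{\mathcal{H}}^{\varrho}(x^{*})$ and $\overline{\mathcal{H}}^{\varrho}(x^{*}+\kappa\tau)-\overline{\mathcal{H}}^{\varrho}(x^{*})$ are negative, and the infimum of these thresholds could a priori be $0$. This is precisely what routing the argument through the $\mathfrak{D}$-metric, a supremum over $\varrho$, and through Theorem \ref{jubu} avoids, since $\mathfrak{D}$-convergence is uniform in $\varrho$ by construction. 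I would therefore keep the estimates at the level of the fuzzy numbers rather than descending to the individual cuts until the very last step.
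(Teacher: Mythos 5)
Your proposal is correct, and its skeleton coincides with the paper's proof: both pass from gH-differentiability to the $\mathfrak{D}$-limit $\lim_{\kappa\rightarrow0^{+}}\frac{1}{\kappa}\left(\mathcal{H}(x^{*}+\kappa\tau)\ominus_{gH}\mathcal{H}(x^{*})\right)=\tau^{\top}\nabla\mathcal{H}(x^{*})$ via Definition \ref{def2.21} and Lemma \ref{lemm2.22}, deduce that the gH-difference is $\prec\tilde{0}$ for all sufficiently small $\kappa>0$, and convert this into the conclusion with Remark \ref{rem2.4}. The real difference lies in the middle step: the paper asserts it in a single line with no justification, whereas you make it rigorous by packaging the difference quotient into an auxiliary fuzzy function $\mathcal{G}$ of the step length, right-continuous at $0$ with $\mathcal{G}(0)=\tau^{\top}\nabla\mathcal{H}(x^{*})\prec\tilde{0}$, applying the sign-preservation Theorem \ref{jubu} one-sidedly, and only then stripping off the positive factor $\frac{1}{\kappa}$; this supplies exactly the argument the paper leaves implicit, and your cut-level scaling and ordering manipulations are all sound. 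One refinement to your closing paragraph: $\mathfrak{D}$-convergence (uniform closeness in $\varrho$) is not by itself enough for sign preservation; one also needs the endpoints of the limit $\mathcal{G}(0)$ to be bounded away from $0$ uniformly in $\varrho$, which holds because the $\varrho$-cuts of a fuzzy number are nested, giving $\sup_{\varrho\in[0,1]}\overline{\mathcal{G}(0)}^{\varrho}=\overline{\mathcal{G}(0)}^{0}<0$. This point is equally latent, and equally unaddressed, in the paper's own proof of Theorem \ref{jubu}, so your use of that theorem as a black box is legitimate, but a fully self-contained argument should record the nestedness observation.
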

\begin{proof}
Since $\mathcal{H}$ be gH-differentiable at $x^{*}$, then from Definition \ref{def2.21} and Lemma \ref{lemm2.22}, we have
\begin{equation*}
\lim_{\kappa\rightarrow0^{+}}\frac{1}{\kappa}(\mathcal{H}(x^{*}+\kappa \tau)\ominus_{gH}\mathcal{H}(x^{*}))=\tau^{\top}\nabla \mathcal{H}(x^{*}).
\end{equation*}
Since $\tau^{\top}\nabla \mathcal{H}(x^{*})\prec\tilde{0}$, we get
\begin{equation*}
\mathcal{H}(x^{*}+\kappa \tau)\ominus_{gH}\mathcal{H}(x^{*})\prec\tilde{0},
\end{equation*}
for each $\kappa\in(0, \delta)$ with some $\delta>0$.
From Remark \ref{rem2.4}, we get
\begin{equation*}
\mathcal{H}(x^{*}+\kappa \tau)\prec\mathcal{H}(x^{*}) ~\text{for any}~\kappa\in(0, \delta).
\end{equation*}
\end{proof}
\begin{remark}
The $\tau$ in Theorem \ref{thm3.1} represents a descent direction of $\mathcal{H}$ at $x^{*}$.
\end{remark}

\begin{definition}\label{def3.1}
Let $\mathcal{H}: \mathbb{T}\rightarrow\mathcal{F}_{\mathcal{C}}$ be gH-differentiable at $x^{*}\in\mathbb{T}$, we denote by $\hat{\mathcal{H}}(x^{*})$ the set of directions $\tau\in\mathbb{R}^{n}$ with $\tau^{\top}\nabla \mathcal{H}(x^{*})\prec\tilde{0}$, i.e.,
\begin{equation*}
\hat{\mathcal{H}}(x^{*})=\{\tau\in\mathbb{R}^{n}: \tau^{\top}\nabla \mathcal{H}(x^{*})\prec\tilde{0}\}.
\end{equation*}
As for any $\tau\in\mathbb{R}^{n}, \kappa \tau\in\hat{\mathcal{H}}(x^{*})$ for each $\kappa>0$, the set $\hat{\mathcal{H}}(x^{*})$ is said to be the cone of descent directions of $\mathcal{H}$ at $x^{*}$.
\end{definition}

\begin{definition}\label{def3.2}\cite{Bazaraa}
Given a $x^{*}\in\mathbb{T}$. The cone of feasible directions of $\mathbb{T}$ at $x^{*}$ is
\begin{equation*}
\hat{S}(x^{*})=\{\tau\in\mathbb{R}^{n}: x^{*}+\kappa \tau\in\mathbb{T}, \forall \kappa\in(0, \delta)~\text{for some}~\delta>0\}.
\end{equation*}
\end{definition}

\subsection{Unconstrained fuzzy optimization problems}
\qquad Consider the following unconstrained fuzzy optimization problem (UFOP, for short):
$$~~~\min_{x\in\mathbb{T}}\mathcal{H}(x),$$
where $\mathcal{H}: \mathbb{T}\rightarrow\mathcal{F}_{\mathcal{C}}$ be fuzzy function on $\mathbb{T}$.
\begin{definition}\cite{Chalco-Cano}
The point $x^{*}\in\mathbb{T}$ is said to be a (local) optimal solution of (UFOP) if there not exist $x\in \mathbb{T}(\in \mathcal{N}_{\delta}(x^{*})\cap\mathbb{T})$ such that $\mathcal{H}(x)\prec \mathcal{H}(x^{*})$, here $\mathcal{N}_{\delta}(x^{*})=\{x: 0<|x-x^{*}|<\delta, \delta>0\}$.
\end{definition}
\begin{theorem}\label{thm3.2}
If $\mathcal{H}$ be gH-differentiable at $x^{*}\in\mathbb{T}$ and $x^{*}$ is a local optimal solution of (UFOP), then $\hat{\mathcal{H}}(x^{*})\cap\hat{S}(x^{*})=\emptyset$.
\end{theorem}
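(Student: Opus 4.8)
The plan is to argue by contradiction, combining the two structural facts that have already been packaged for us: Theorem~\ref{thm3.1} (which converts a first-order descent condition into an actual strict decrease of $\mathcal{H}$ along a ray) and the definition of local optimality for (UFOP). Suppose, contrary to the claim, that $\hat{\mathcal{H}}(x^{*})\cap\hat{S}(x^{*})\neq\emptyset$, and fix a direction $\tau$ lying in this intersection. My aim is to manufacture from $\tau$ a nearby feasible point that strictly improves on $x^{*}$, contradicting optimality.

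First I would exploit membership in the descent cone. Since $\tau\in\hat{\mathcal{H}}(x^{*})$, by Definition~\ref{def3.1} we have $\tau^{\top}\nabla\mathcal{H}(x^{*})\prec\tilde{0}$. Theorem~\ref{thm3.1} then supplies a radius $\delta_{1}>0$ such that $\mathcal{H}(x^{*}+\kappa\tau)\prec\mathcal{H}(x^{*})$ for every $\kappa\in(0,\delta_{1})$. Next I would use membership in the feasible-direction cone: since $\tau\in\hat{S}(x^{*})$, Definition~\ref{def3.2} gives a radius $\delta_{2}>0$ with $x^{*}+\kappa\tau\in\mathbb{T}$ for all $\kappa\in(0,\delta_{2})$.

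The remaining step is bookkeeping on the two radii. Setting $\delta=\min\{\delta_{1},\delta_{2}\}>0$, for any fixed $\kappa\in(0,\delta)$ the point $x=x^{*}+\kappa\tau$ is simultaneously feasible ($x\in\mathbb{T}$) and, by shrinking $\kappa$ further if needed so that $x\in\mathcal{N}_{\delta}(x^{*})\cap\mathbb{T}$, lies in an arbitrarily small deleted neighbourhood of $x^{*}$, yet satisfies $\mathcal{H}(x)\prec\mathcal{H}(x^{*})$. This exhibits exactly the kind of point whose nonexistence defines a local optimal solution, contradicting the hypothesis that $x^{*}$ is locally optimal for (UFOP). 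Hence no such $\tau$ exists and $\hat{\mathcal{H}}(x^{*})\cap\hat{S}(x^{*})=\emptyset$.

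There is no serious analytic obstacle here: all the quantitative work—turning the strict gradient inequality into a genuine strict decrease of the fuzzy objective via the $\prec$ order and the gH-directional derivative—has already been discharged in Theorem~\ref{thm3.1}. The only point requiring a little care is aligning the radius $\delta$ of the constructed neighbourhood with the neighbourhood $\mathcal{N}_{\delta}(x^{*})$ appearing in the definition of local optimality, so that the produced competitor $x=x^{*}+\kappa\tau$ genuinely falls inside the region over which $x^{*}$ is assumed optimal; this is handled simply by taking $\kappa$ small.
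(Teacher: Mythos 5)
Your proof is correct and follows essentially the same route as the paper's own argument: contradiction, Theorem~\ref{thm3.1} for the strict decrease along the ray, Definition~\ref{def3.2} for feasibility, and $\delta=\min\{\delta_{1},\delta_{2}\}$ to produce a feasible strictly better point contradicting local optimality. Your extra remark about shrinking $\kappa$ so the competitor lands inside the neighbourhood $\mathcal{N}_{\delta}(x^{*})$ from the definition of local optimality is a small refinement of care that the paper's proof glosses over, but the substance is identical.
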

\begin{proof}
On contrary, assume that $\hat{\mathcal{H}}(x^{*})\cap\hat{S}(x^{*})\not=\emptyset$ and $\tau\in\hat{\mathcal{H}}(x^{*})\cap\hat{S}(x^{*})$. Then, by Theorem \ref{thm3.1}, there is $\delta_{1}>0$ that makes
\begin{equation*}
\mathcal{H}(x^{*}+\kappa \tau)\prec \mathcal{H}(x^{*}),~~\kappa\in(0, \delta_{1}).
\end{equation*}
Also, by Definition \ref{def3.2}, there is $\delta_{2}>0$ that makes
$$x^{*}+\kappa \tau\in\mathbb{T},~~\kappa\in(0, \delta_{2}).$$
Defining $\delta = \min\{\delta_{1}, \delta_{2}\} > 0$, for all $\kappa\in(0, \delta)$, we obtain
$$x^{*}+\kappa \tau\in\mathbb{T}~ \text{and} ~\mathcal{H}(x^{*}+\kappa \tau)\prec \mathcal{H}(x^{*}).$$
This contradicts that $x^{*}$ be the local optimal solution. Thus, $\hat{\mathcal{H}}(x^{*})\cap\hat{S}(x^{*})=\emptyset$.
\end{proof}
\begin{cor}\label{cor3.2.1}
Let $\mathcal{H}$ be gH-differentiable at $x^{*}\in\mathbb{T}$. If $\hat{\mathcal{H}}(x^{*})\cap\hat{S}(x^{*})\neq\emptyset$, then $x^{*}$ is not a local optimal point of (UFOP).
\end{cor}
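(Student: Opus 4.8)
The plan is to observe that this corollary is nothing more than the logical contrapositive of Theorem \ref{thm3.2}. That theorem asserts that local optimality of $x^{*}$ for (UFOP) forces $\hat{\mathcal{H}}(x^{*})\cap\hat{S}(x^{*})=\emptyset$; the statement ``local optimal $\Rightarrow$ empty intersection'' is logically equivalent to ``nonempty intersection $\Rightarrow$ not local optimal,'' which is exactly what is claimed here. So the shortest route is simply to invoke Theorem \ref{thm3.2} and negate.

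For completeness, and to match the constructive flavour of the preceding proofs, I would also give the direct argument. Assuming $\hat{\mathcal{H}}(x^{*})\cap\hat{S}(x^{*})\neq\emptyset$, I would fix some $\tau$ lying in this intersection. Membership in $\hat{\mathcal{H}}(x^{*})$ means $\tau^{\top}\nabla\mathcal{H}(x^{*})\prec\tilde{0}$, so by Theorem \ref{thm3.1} there is $\delta_{1}>0$ with $\mathcal{H}(x^{*}+\kappa\tau)\prec\mathcal{H}(x^{*})$ for all $\kappa\in(0,\delta_{1})$. Membership in $\hat{S}(x^{*})$ gives, via Definition \ref{def3.2}, some $\delta_{2}>0$ with $x^{*}+\kappa\tau\in\mathbb{T}$ for all $\kappa\in(0,\delta_{2})$. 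Setting $\delta=\min\{\delta_{1},\delta_{2}\}>0$, every $\kappa\in(0,\delta)$ yields a feasible point $x^{*}+\kappa\tau\in\mathbb{T}$ with strictly smaller objective value $\mathcal{H}(x^{*}+\kappa\tau)\prec\mathcal{H}(x^{*})$.

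I would then conclude by noting that such points exist in $\mathcal{N}_{\delta}(x^{*})\cap\mathbb{T}$ for arbitrarily small $\kappa$, i.e. within every neighborhood of $x^{*}$, which directly violates the definition of a local optimal solution. Hence $x^{*}$ is not a local optimal point of (UFOP).

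There is no real obstacle to overcome: the entire content is already carried by Theorem \ref{thm3.1} (descent along $\tau$) and Definition \ref{def3.2} (feasibility along $\tau$), both established earlier. The only point meriting a moment's care is the logical bookkeeping---that exhibiting one feasible strictly-dominating point in every neighborhood is precisely the negation of local optimality---so I would state that explicitly rather than leave it implicit.
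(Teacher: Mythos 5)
Your proposal is correct and takes the same route as the paper: the corollary appears there without proof precisely because it is the contrapositive of Theorem \ref{thm3.2}, which is exactly your first observation. Your supplementary direct argument simply replays the paper's own proof of Theorem \ref{thm3.2} (Theorem \ref{thm3.1} for descent plus Definition \ref{def3.2} for feasibility), so nothing new is needed.
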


\begin{example}\label{exa1}
We consider the following UFOP:
$$\min_{(x_{1}, x_{2})\in\mathbb{T}} \mathcal{H}(x_{1}, x_{2}),$$
where $\mathcal{H}(x_{1}, x_{2})=\langle2, 3, 7\rangle(x_{1}-1.5)^{2}+\langle1, 2, 5\rangle x_{2}^{2}+1$ and $\mathbb{T}=\{(x_{2}, x_{2}): 1\leq x_{1}\leq2,~1\leq x_{2}\leq2\}$. Notice that for all $\varrho\in[0, 1]$,
$$\mathcal{H}_{\varrho}(x_{1}, x_{2})=[2+\varrho, 7-4\varrho](x_{1}-1.5)^{2}+[1+\varrho, 5-3\varrho]x_{2}^{2}+1.$$
The functions $\mathcal{H}_{\varrho}$ are depicted in Figures \ref{fig1} - \ref{fig3} for $\varrho=0, 0.3, 0.7$ and $ 1$ respectively. Evidently, $x^{*}=(1.5, 1)$ is an optimal point and
\begin{figure}
\subfigure[$\mathcal{H}_{\varrho}(x_{1}, x_{2})$ with $\varrho=0$.]
{
\begin{minipage}{6cm}
\centering
\includegraphics[scale=0.45]{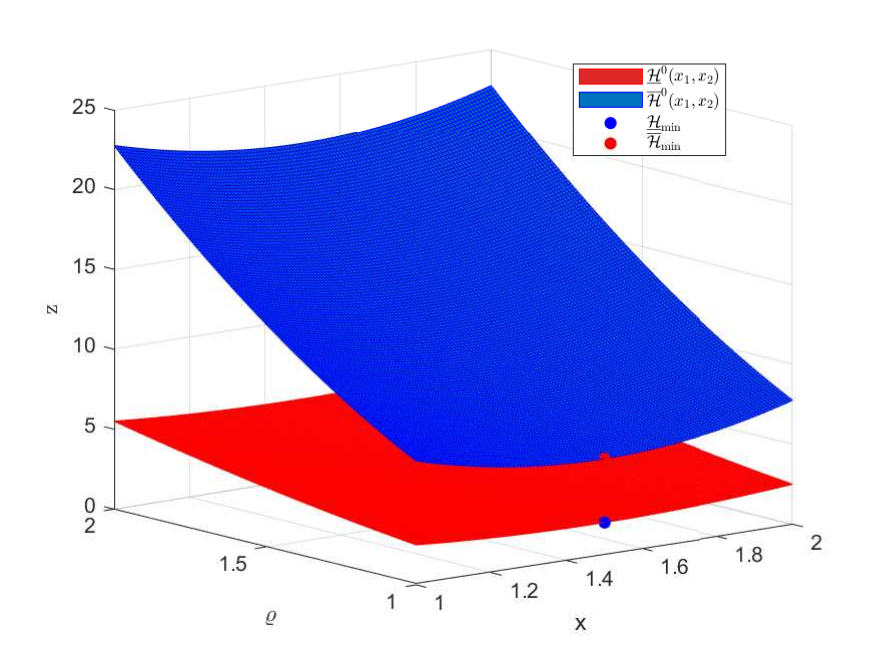}
\label{fig1}
\end{minipage}
}
\subfigure[$\mathcal{H}_{\varrho}(x_{1}, x_{2})$ with $\varrho=0.3$.]
{
\begin{minipage}{6cm}
\centering
\includegraphics[scale=0.45]{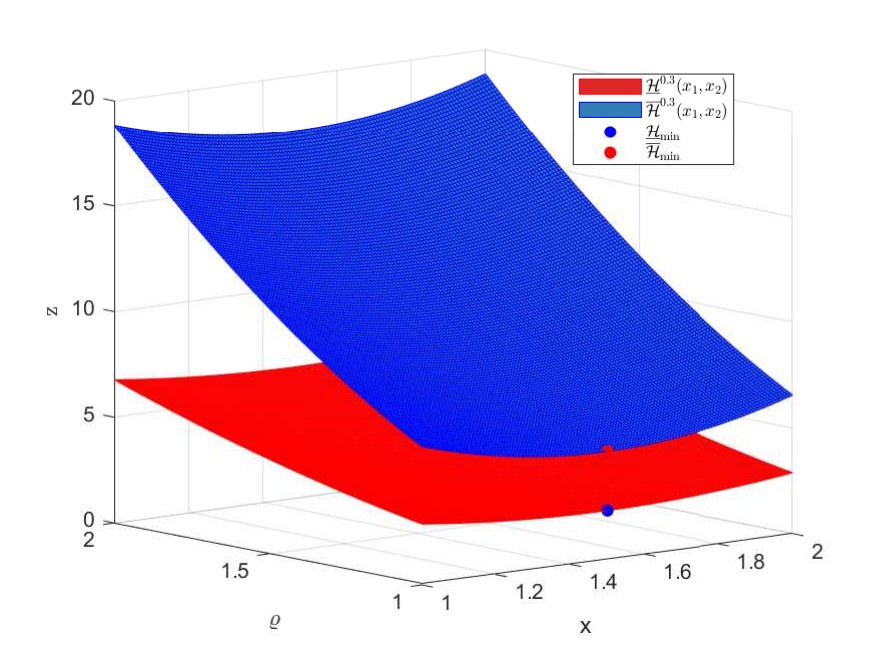}
\label{fig2}
\end{minipage}
}\newline
\subfigure[$\mathcal{H}_{\varrho}(x_{1}, x_{2})$ with $\varrho=0.7$.]
{
\begin{minipage}{6cm}
\centering
\includegraphics[scale=0.45]{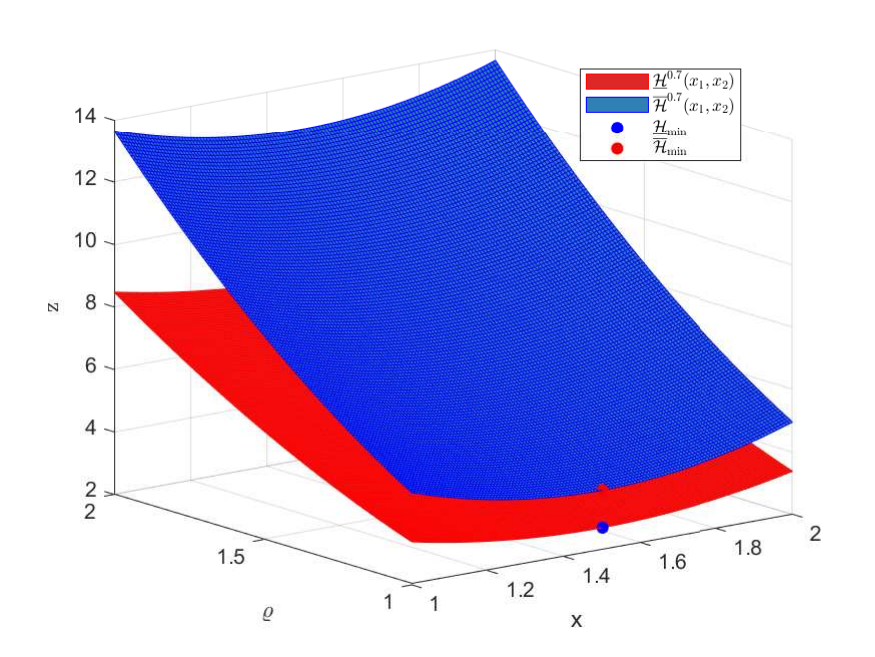}
\label{fig3}
\end{minipage}
}
\subfigure[$\mathcal{H}_{\varrho}(x_{1}, x_{2})$ with $\varrho=1$.]
{
\begin{minipage}{6cm}
\centering
\includegraphics[scale=0.45]{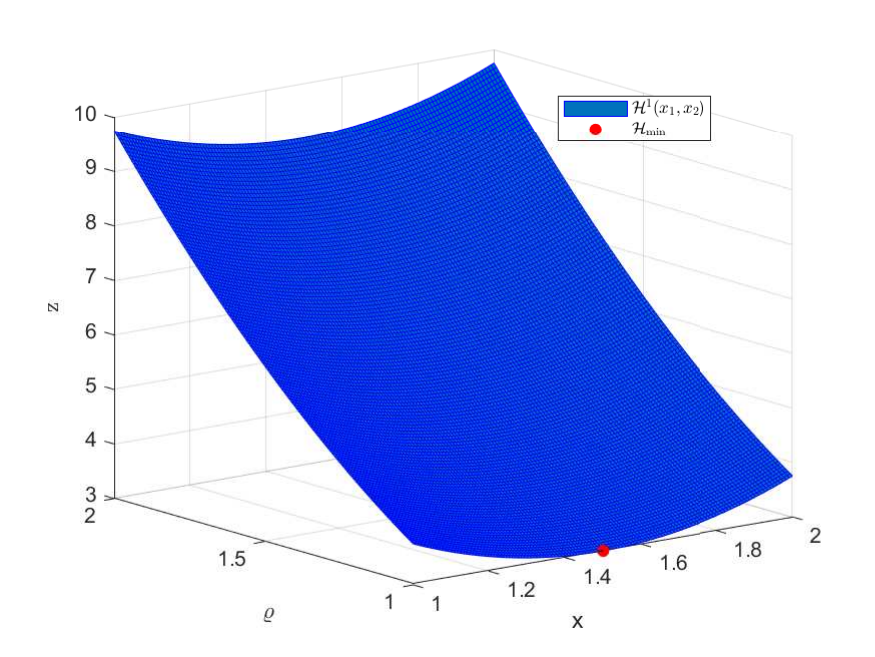}
\label{fig4}
\end{minipage}
}
\end{figure}
\begin{equation*}
\begin{split}
\hat{S}(x^{*})&=\{(\tau_{1}, \tau_{2})\not=(0, 0): (1.5+\kappa \tau_{1}, 1+\kappa \tau_{2})\in\mathbb{T}, \\
&~~~~~~\forall \kappa\in(0, \delta)~\text{for some}~\delta>0\}\\
&=\{(\tau_{1}, \tau_{2})\not=(0, 0): \tau_{2}\geq0\},
\end{split}
\end{equation*}
and
\begin{equation*}
\begin{split}
\hat{\mathcal{H}}(x^{*})&=\{(\tau_{1}, \tau_{2})\in\mathbb{R}^{2}: (\tau_{1}, \tau_{2})\nabla{\mathcal{H}}(x^{*})\prec\tilde{0}\}\\
&=\{(\tau_{1}, \tau_{2})\in\mathbb{R}^{2}: \tau_{1}D_{1}\mathcal{H}(x^{*})+\tau_{2}D_{2}\mathcal{H}(x^{*})\prec\tilde{0}\}\\
&=\{(\tau_{1}, \tau_{2})\in\mathbb{R}^{2}: \tau_{2}D_{2}\mathcal{H}_{\varrho}(x^{*})\prec_{LU}[0, 0]\\
&~~~~~~\text{for each}~\varrho\in[0, 1]\}\\
&=\{(\tau_{1}, \tau_{2})\in\mathbb{R}^{2}: \tau_{2}(2+2\varrho)<0~\text{and}~ \tau_{2}(10-6\varrho)<0\\
&~~~~~~\text{for each}~\varrho\in[0, 1]\}\\
&=\{(\tau_{1}, \tau_{2})\in\mathbb{R}^{2}: \tau_{2}<0\}.
\end{split}
\end{equation*}
Thus, we see that
$$\hat{S}(x^{*})\cap\hat{\mathcal{H}}(x^{*})=\emptyset.$$

Consider another point $\breve{x}=(1.5, 2)$.  By a similar way, we get
\begin{equation*}
\hat{S}(\breve{x})=\{(\tau_{1}, \tau_{2})\not=(0, 0): \tau_{2}\leq0\},
\end{equation*}
and
\begin{equation*}
\hat{\mathcal{H}}(\breve{x})=\{(\tau_{1}, \tau_{2})\in\mathbb{R}^{2}: \tau_{2}<0\}.
\end{equation*}
Since
\begin{equation*}
\hat{S}(\breve{x})\cap\hat{\mathcal{H}}(\breve{x})=\{(\tau_{1}, \tau_{2})\in\mathbb{R}^{2}: \tau_{2}<0\}\neq\emptyset.
\end{equation*}
Thus, from the Corollary \ref{cor3.2.1}, $\breve{x}$ must be not an optimal point of $\mathcal{H}$ on $\mathbb{T}$.
\end{example}

\begin{theorem}\label{thm3.4}(Fuzzy First Gordan's Theorem).
Let $\mathcal{U}=(m_{1}, m_{2}, \ldots, m_{n})^{\top}\in\mathcal{F}_{\mathcal{C}}^{n}$. Then, just a conclusion of the below holds:
\begin{enumerate}[label=(\roman*)]
\item there is $y=(y_{1}, y_{2}, \ldots, y_{n})^{\top}\in\mathbb{R}^{n}$ makes $y^{\top}\mathcal{U}\prec\tilde{0}$;
\item for each $\varrho\in[0, 1]$, there is $x\in\mathbb{R}, x>0$ makes $(0, 0, \ldots, 0)_{n}^{\top}\in x\mathcal{U}_{\varrho}$.
    \end{enumerate}
\end{theorem}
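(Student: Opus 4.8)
The plan is to reduce the fuzzy alternative to a level-wise real alternative through the $\varrho$-cut arithmetic, and then invoke the classical Gordan theorem \cite{Bazaraa} (equivalently, the separating-hyperplane theorem for a point and a compact convex set) at the level of cuts. First I would write out the $\varrho$-cut of the scalar fuzzy number $y^{\top}\mathcal{U}=\sum_{j=1}^{n}y_{j}m_{j}$, namely
\begin{equation*}
[y^{\top}\mathcal{U}]^{\varrho}=\Big[\sum_{j=1}^{n}\min\{y_{j}\underline{m_{j}}^{\varrho},y_{j}\overline{m_{j}}^{\varrho}\},\ \sum_{j=1}^{n}\max\{y_{j}\underline{m_{j}}^{\varrho},y_{j}\overline{m_{j}}^{\varrho}\}\Big],
\end{equation*}
and observe that the upper endpoint equals the support value $\sup_{v\in\mathcal{U}_{\varrho}}\langle y,v\rangle$ of the box $\mathcal{U}_{\varrho}=\prod_{j=1}^{n}[\underline{m_{j}}^{\varrho},\overline{m_{j}}^{\varrho}]\subseteq\mathbb{R}^{n}$. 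Using the definition of the strict order $\prec$ together with Remark \ref{rem2.4}, condition (i) then reads: there is a single $y$ with $\langle y,v\rangle<0$ for every $v\in\mathcal{U}_{\varrho}$ and every $\varrho\in[0,1]$; that is, $y$ strictly separates the origin from each cut-box $\mathcal{U}_{\varrho}$.

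Next I would prove that the two alternatives cannot hold simultaneously, which is the easy half. If (ii) holds, then $0\in\mathcal{U}_{\varrho}$, so for any $y$ the support value satisfies $\sup_{v\in\mathcal{U}_{\varrho}}\langle y,v\rangle\geq\langle y,0\rangle=0$; hence the upper endpoint of $[y^{\top}\mathcal{U}]^{\varrho}$ is nonnegative, giving $y^{\top}\mathcal{U}\not\prec\tilde{0}$, so (i) fails.

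For the converse (exhaustiveness) I would argue contrapositively, assuming (i) fails and deriving (ii). Fixing $\varrho$, since no $y$ can make $\sup_{v\in\mathcal{U}_{\varrho}}\langle y,v\rangle<0$, the origin admits no strict separating hyperplane from the nonempty compact convex box $\mathcal{U}_{\varrho}$; by the classical Gordan/separation theorem this is possible only when $0\in\mathcal{U}_{\varrho}$, which is exactly the statement $(0,\ldots,0)^{\top}\in x\mathcal{U}_{\varrho}$ for some (equivalently, every) $x>0$. The nestedness of the cuts, $\mathcal{U}_{\varrho'}\subseteq\mathcal{U}_{\varrho}$ for $\varrho\leq\varrho'$, is what lets me pass between the ``single $y$ for all $\varrho$'' formulation of (i) and the per-level separation statements, by collapsing the family of conditions onto the widest cut $\mathcal{U}_{0}$.

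The step I expect to be the main obstacle is the uniformity in $\varrho$: condition (i) requires one vector $y$ to work simultaneously across the whole continuum $\varrho\in[0,1]$, whereas the classical theorem is finite-dimensional and applies one cut at a time. Handling this cleanly means exploiting the monotone nesting of the cuts to reduce the continuum of separation requirements to an extreme cut, and checking that the strict fuzzy relation $\prec$ (strict at \emph{every} level) corresponds precisely to strict separation by an open half-space, so that compactness of $\mathcal{U}_{\varrho}$ delivers the strict inequality. The endpoint/support-value bookkeeping and the sign-splitting inside $\max\{y_{j}\underline{m_{j}}^{\varrho},y_{j}\overline{m_{j}}^{\varrho}\}$ are routine once this correspondence is in place.
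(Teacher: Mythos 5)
Your first half (mutual exclusivity) is correct and is essentially the paper's own argument recast in support-function language: if $0\in\mathcal{U}_{\varrho}$, then $\sup_{v\in\mathcal{U}_{\varrho}}\langle y,v\rangle\geq\langle y,0\rangle=0$, so the upper endpoint of $[y^{\top}\mathcal{U}]^{\varrho}$ cannot be negative and (i) fails; your identification of that upper endpoint with the support value of the box $\mathcal{U}_{\varrho}$ is also right.

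The exhaustiveness half, however, has a genuine gap, and it sits exactly where you feared. The negation of (i) is not the per-level statement you invoke (``fixing $\varrho$, no $y$ makes the sup negative''); it is: for every $y$ there exists \emph{some} $\varrho$, depending on $y$, at which the sup is nonnegative. Your proposed repair --- collapsing onto the widest cut by nestedness --- does work for (i): since $\mathcal{U}_{\varrho}\subseteq\mathcal{U}_{0}$, condition (i) is equivalent to strict separation of the origin from $\mathcal{U}_{0}$ alone, so not-(i) is equivalent, by the separation theorem, to $0\in\mathcal{U}_{0}$. But the collapse runs the \emph{wrong way} for (ii): statement (ii) requires $0\in\mathcal{U}_{\varrho}$ for every $\varrho$, which by the same nestedness is equivalent to $0\in\mathcal{U}_{1}$, the narrowest cut, and $0\in\mathcal{U}_{0}$ does not imply $0\in\mathcal{U}_{1}$. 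In fact the implication not-(i) $\Rightarrow$ (ii) is false, so no argument can close the gap: take $n=1$ and $m_{1}=\langle-1,1,2\rangle$, so that $[m_{1}]^{0}=[-1,2]$ and $[m_{1}]^{1}=[1,1]$. Since $0\in[m_{1}]^{0}$, every $y$ gives $0\in y[m_{1}]^{0}$, hence (i) fails; since $0\notin x[m_{1}]^{1}$ for every $x>0$, (ii) fails too, so both alternatives fail simultaneously and the theorem as stated is not true. You should know that the paper's own proof of this direction suffers from the same defect: the vector $y_{0}$ it constructs satisfies $y_{0}^{\top}\mathcal{U}_{\varrho}\prec_{LU}[0,0]$ only at the particular level $\varrho$ at which (ii) fails, not at all levels, so it never actually contradicts the negation of (i). A correct dichotomy replaces (ii) by ``$0\in\mathcal{U}_{0}$'' (equivalently, $0\in x\,\mathcal{U}_{\varrho}$ for \emph{some} $\varrho$ and some $x>0$), and your separation argument proves precisely that version.
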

\begin{proof}
Suppose (i) holds. On contrary, assume that (ii) also holds.

As (i) holds, there exists $y_{0}\in\mathbb{R}^{n}$ such that
$$y_{0}^{\top}\mathcal{U}\prec\tilde{0}.$$
Then for any $x\in\mathbb{R}, x>0$,
$$x(y_{0}^{\top}\mathcal{U})\prec\tilde{0}\Longrightarrow y_{0}^{\top}(x\mathcal{U})\prec\tilde{0}.$$
That is,
\begin{equation}\label{(77)}
y_{0}^{\top}(x\mathcal{U}_{\varrho})\prec_{LU}[0, 0] ~\text{for all}~\varrho\in[0, 1].
\end{equation}
As (ii) is also true, then for all~$\varrho\in[0, 1]$~and exists~$x_{0}\in\mathbb{R}, x_{0}>0$,
$$(0, 0, \ldots, 0)_{n}^{\top}\in x_{0}\mathcal{U}_{\varrho}.$$
For all $y\in\mathbb{R}^{n}$, we have
\begin{equation}\label{(88)}
0\in y^{\top}(x_{0}\mathcal{U}_{\varrho}).
\end{equation}
Since \eqref{(77)} and \eqref{(88)} cannot hold simultaneously, a contradiction is obtained.

The other cases are proved below, suppose that (ii) holds, and (i) is false. On contrary, if (ii) is false, then for some $\varrho\in[0, 1]$ and all $x\in\mathbb{R}, x>0$,
$$(0, 0, \ldots, 0)_{n}^{\top}\notin x\mathcal{U}_{\varrho}\Longrightarrow (0, 0, \ldots, 0)_{n}^{\top}\notin \mathcal{U}_{\varrho}.$$
That is,
\begin{equation*}\label{(99)}
\begin{split}
&\exists~ j\in\{1, 2, \ldots, n\} ~\text{such that}~0\notin [m_{j}]^{\varrho},\\
\text{or,}~&\exists~ j\in\{1, 2, \ldots, n\} ~\text{such that}~[0, 0]\prec_{LU}[m_{j}]^{\varrho}~\text{or}~[m_{j}]^{\varrho}\prec_{LU}[0, 0].
\end{split}
\end{equation*}
Let
$$P=\{j: 0\in [m_{j}]^{\varrho}, j\in\{1, 2, \ldots, n\}\}$$
and
$$Q=\{j: 0\notin [m_{j}]^{\varrho}, j\in\{1, 2, \ldots, n\}\}.$$
Obviously, $Q\not=\emptyset$, $P\cup Q=\{1, 2, \ldots, n\}$ and $P\cap Q=\emptyset$.\\
Build a vector $y_{0}=(y^{0}_{1}, y^{0}_{2}, \ldots, y^{0}_{n})^{\top}$ through
\begin{equation*}
y^{0}_{j}=\begin{cases}
 0,&\text{if }j\in P,\\
 1,&\text{if }j\in Q~\text{and}~[m_{j}]^{\varrho}\prec_{LU}[0, 0],\\
 -1,&\text{if }j\in Q~\text{and}~[0, 0]\prec_{LU}[m_{j}]^{\varrho}.
  \end{cases}
\end{equation*}
Then
\begin{equation}\label{(1010)}
\begin{split}
&\sum_{j\in Q}y^{0}_{j}[m_{j}]^{\varrho}+\sum_{j\in P}y^{0}_{j}[m_{j}]^{\varrho}\prec_{LU}[0, 0],\\
\text{or,}~&\sum_{j=1}^{n}y^{0}_{j}[m_{j}]^{\varrho}\prec_{LU}[0, 0],\\
\text{or,}~&y_{0}^{\top}\mathcal{U}_{\varrho}\prec_{LU}[0, 0].
\end{split}
\end{equation}
However, as (i) is false, then there is no $y\in\mathbb{R}^{n}$ such that
\begin{equation*}
\begin{split}
&y^{\top}\mathcal{U}\prec\tilde{0},\\
\text{or,}~&y^{\top}\mathcal{U}_{\varrho}\prec_{LU}[0, 0] ~\text{for all}~\varrho\in[0, 1],
\end{split}
\end{equation*}
which is contradictory to \eqref{(1010)}. Therefore, we get the desired conclusion.
\end{proof}

Next we give the first-order optimality condition for the UFOP.
\begin{theorem}\label{thmm3.3}
Let $x^{*}$ is a local optimal solution of (UFOP). If $\mathcal{H}$ is gH-differentiable at $x^{*}$. Then, $(0, 0, \ldots, 0)_{n}^{\top}\in\nabla \mathcal{H}_{\varrho}(x^{*})$ for all $\varrho\in[0, 1]$.
\end{theorem}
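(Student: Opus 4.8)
The plan is to read the desired conclusion as precisely the second alternative of the Fuzzy First Gordan's Theorem (Theorem \ref{thm3.4}) applied to the gradient vector $\mathcal{U}=\nabla\mathcal{H}(x^{*})$, and then to rule out the first alternative using the optimality of $x^{*}$. Since $\mathcal{H}$ is gH-differentiable at $x^{*}$, the gradient $\nabla\mathcal{H}(x^{*})=(D_{1}\mathcal{H}(x^{*}),\ldots,D_{n}\mathcal{H}(x^{*}))^{\top}$ is a well-defined element of $\mathcal{F}_{\mathcal{C}}^{n}$, so Theorem \ref{thm3.4} applies to it. With $\mathcal{U}=\nabla\mathcal{H}(x^{*})$, alternative (ii) reads: for every $\varrho\in[0,1]$ there is $x>0$ with $(0,0,\ldots,0)_{n}^{\top}\in x\,\nabla\mathcal{H}_{\varrho}(x^{*})$. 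Because each component of the $\varrho$-cut is an interval and $x>0$ scales $[a,b]$ to $[xa,xb]$, one has $0\in x[a,b]$ iff $0\in[a,b]$; hence alternative (ii) is exactly equivalent to the target statement $(0,0,\ldots,0)_{n}^{\top}\in\nabla\mathcal{H}_{\varrho}(x^{*})$ for all $\varrho\in[0,1]$. It therefore suffices to exclude alternative (i).

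Next I would argue that alternative (i)---the existence of $\tau\in\mathbb{R}^{n}$ with $\tau^{\top}\nabla\mathcal{H}(x^{*})\prec\tilde{0}$---is incompatible with the local optimality of $x^{*}$. By Definition \ref{def3.1}, such a $\tau$ is precisely an element of the descent cone $\hat{\mathcal{H}}(x^{*})$, so alternative (i) asserts $\hat{\mathcal{H}}(x^{*})\neq\emptyset$. Because the problem is unconstrained, $x^{*}$ is an interior point of $\mathbb{T}$, so every direction is feasible and $\hat{S}(x^{*})=\mathbb{R}^{n}$ by Definition \ref{def3.2}. Theorem \ref{thm3.2} then yields $\hat{\mathcal{H}}(x^{*})=\hat{\mathcal{H}}(x^{*})\cap\hat{S}(x^{*})=\emptyset$, so no such $\tau$ exists and alternative (i) fails. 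Equivalently, if one had $\tau\in\hat{\mathcal{H}}(x^{*})$, then Theorem \ref{thm3.1} would produce a $\kappa>0$ with $\mathcal{H}(x^{*}+\kappa\tau)\prec\mathcal{H}(x^{*})$ while $x^{*}+\kappa\tau\in\mathbb{T}$, contradicting the optimality of $x^{*}$.

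Finally, since Theorem \ref{thm3.4} guarantees that exactly one of (i) and (ii) holds and alternative (i) has been excluded, alternative (ii) must hold; by the equivalence recorded in the first paragraph, this is the claimed conclusion $(0,0,\ldots,0)_{n}^{\top}\in\nabla\mathcal{H}_{\varrho}(x^{*})$ for all $\varrho\in[0,1]$.

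I expect the main obstacle to be the identification $\hat{S}(x^{*})=\mathbb{R}^{n}$: this is where the \emph{unconstrained} hypothesis is genuinely used, and it tacitly requires $x^{*}$ to lie in the interior of $\mathbb{T}$. For a boundary optimizer the feasible-direction cone is only a proper subcone, so Theorem \ref{thm3.2} no longer forces $\hat{\mathcal{H}}(x^{*})$ to be empty and the gradient need not contain $\tilde{0}$ (indeed the optimal point $(1.5,1)$ in Example \ref{exa1} lies on $\partial\mathbb{T}$ and has $[D_{2}\mathcal{H}(x^{*})]^{\varrho}=[2+2\varrho,\,10-6\varrho]\not\ni 0$). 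Apart from flagging this interior-point requirement, the only routine verification is the scaling equivalence $0\in x[a,b]\Leftrightarrow 0\in[a,b]$ for $x>0$, which makes alternative (ii) coincide verbatim with the statement to be proved.
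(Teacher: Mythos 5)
Your proposal is correct and takes essentially the same route as the paper: both arguments use local optimality (via Theorem \ref{thm3.1}) to show that no $\tau$ satisfies $\tau^{\top}\nabla\mathcal{H}(x^{*})\prec\tilde{0}$, i.e.\ that alternative (i) of the Fuzzy First Gordan's Theorem \ref{thm3.4} fails for $\mathcal{U}=\nabla\mathcal{H}(x^{*})$, and then read the conclusion off from alternative (ii) together with the scaling observation $0\in x[a,b]\Leftrightarrow 0\in[a,b]$ for $x>0$. Your flagged caveat that this requires $x^{*}$ to be interior to $\mathbb{T}$ (so that $\hat{S}(x^{*})=\mathbb{R}^{n}$) is a real issue the paper's proof silently skips --- indeed the boundary optimum $(1.5,1)$ of Example \ref{exa1} shows the statement fails without it --- but this added care does not change the fact that the proof strategy coincides with the paper's.
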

\begin{proof}
From Definition \ref{def3.1} and Theorem \ref{thm3.1}, if $x^{*}$ be a local optimal solution, then $\hat{\mathcal{H}}(x^{*})=\emptyset$. Thus,
$$\tau^{\top}\nabla \mathcal{H}(x^{*})\prec\tilde{0}~\text{for no}~\tau\in \mathbb{R}^{n}.$$
By Theorem \ref{thm3.4}, for all $\varrho\in[0, 1]$, exists $x^{*}\in\mathbb{R}, x^{*}>0$,
$$(0, 0, \ldots, 0)_{n}^{\top}\in x^{*}\nabla\mathcal{H}_{\varrho}(x^{*})\Longrightarrow(0, 0, \ldots, 0)_{n}^{\top}\in \nabla\mathcal{H}_{\varrho}(x^{*}).$$
\end{proof}

\subsection{Fuzzy optimization problem with inequality constraints}
Consider the following Fuzzy optimization problem (FOP, for short):
\begin{equation*}\label{(66)}
\begin{split}
\min ~~&\mathcal{H}(x),\\
~~~~~~~~~~~~~~\text{subject to}~~&\mathcal{Y}_{j}(x)\preceq\tilde{0}~~\text{for}~j=1, 2, \ldots, s,\\
&x\in\mathbb{T},
\end{split}
\end{equation*}
where $\mathcal{H}: \mathbb{T}\rightarrow\mathcal{F}_{\mathcal{C}}$ and $\mathcal{Y}_{j}: \mathbb{T}\rightarrow\mathcal{F}_{\mathcal{C}}$, $j=1, 2, \ldots, s$. The feasible set of (FOP) is
$$\mathcal{O}=\{x\in\mathbb{T}: \mathcal{Y}_{j}(x)\preceq\tilde{0}~\text{for every}~j=1, 2, \ldots, s\}.$$

\begin{lemma}\label{lem3.1}
Let $\mathcal{Y}_{j}: \mathbb{T}\rightarrow\mathcal{F}_{\mathcal{C}}$, $j=1, 2, \ldots, s$, be fuzzy functions on the open set $\mathbb{T}$ and $\mathbb{S}=\{x\in\mathbb{T}: \mathcal{Y}_{j}(x)\preceq\tilde{0} ~\text{for}~ j = 1, 2, \ldots, s\}$. Let $x^{*}\in\mathbb{S}$ and $\Lambda(x^{*})=\{j: \mathcal{Y}_{j}(x^{*})=\tilde{0}\}$. Assuming that $\mathcal{Y}_{j}$ be gH-differentiable at $x^{*}$ for $j\in \Lambda(x^{*})$ and continuous at $x^{*}$ for $j\not\in \Lambda(x^{*})$, define
\begin{equation*}
\hat{\mathcal{Y}}(x^{*})=\{\tau\in\mathbb{R}^{n}: \tau^{\top}\nabla \mathcal{Y}_{j}(x^{*})\prec\tilde{0},~j\in \Lambda(x^{*})\}.
\end{equation*}
Then,
\begin{equation*}
\hat{\mathcal{Y}}(x^{*})\subseteq\hat{S}(x^{*}),
\end{equation*}
here $\hat{S}(x^{*})=\{\tau\in\mathbb{R}^{n}: x^{*}+\kappa \tau\in\mathbb{S}, \forall \kappa\in(0, \delta),~\delta>0\}.$
\end{lemma}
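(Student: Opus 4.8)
The plan is to prove the inclusion pointwise: I would fix an arbitrary $\tau \in \hat{\mathcal{Y}}(x^*)$ and produce a single $\delta > 0$ such that $x^* + \kappa\tau \in \mathbb{S}$ for every $\kappa \in (0,\delta)$, which is exactly membership in $\hat{S}(x^*)$ by the definition of the cone of feasible directions. Because $\mathbb{S} \subseteq \mathbb{T}$ and $\mathbb{T}$ is open, the first thing to secure is a radius $\delta_0 > 0$ with $x^* + \kappa\tau \in \mathbb{T}$ for all $\kappa \in (0,\delta_0)$. After that, the only remaining task is to keep each of the finitely many constraints $\mathcal{Y}_j(x^* + \kappa\tau) \preceq \tilde{0}$ satisfied, and for this I would split the index set $\{1,\dots,s\}$ into the active part $\Lambda(x^*)$ and its complement, treating the two cases by different tools.

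For an active index $j \in \Lambda(x^*)$ I have $\mathcal{Y}_j(x^*) = \tilde{0}$ together with $\tau^{\top}\nabla \mathcal{Y}_j(x^*) \prec \tilde{0}$. This is precisely the hypothesis of Theorem \ref{thm3.1} applied to $\mathcal{H} = \mathcal{Y}_j$, which furnishes a $\delta_j > 0$ with $\mathcal{Y}_j(x^* + \kappa\tau) \prec \mathcal{Y}_j(x^*) = \tilde{0}$ for every $\kappa \in (0,\delta_j)$; in particular $\mathcal{Y}_j(x^* + \kappa\tau) \preceq \tilde{0}$. Thus the fact that $\tau$ is a strict descent direction of each active constraint is exactly what pushes the moved point strictly inside those constraints. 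For an inactive index $j \notin \Lambda(x^*)$ I have only continuity of $\mathcal{Y}_j$ at $x^*$ together with the strict feasibility $\mathcal{Y}_j(x^*) \prec \tilde{0}$; here I would invoke the local sign-preserving Theorem \ref{jubu}, which gives a $\delta_j' > 0$ such that $\mathcal{Y}_j(y) \prec \tilde{0}$ whenever $0 < |y - x^*| < \delta_j'$. Taking $y = x^* + \kappa\tau$ yields $\mathcal{Y}_j(x^* + \kappa\tau) \prec \tilde{0}$, hence $\preceq \tilde{0}$, for all small $\kappa$.

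To finish I would set $\delta = \min(\{\delta_0\} \cup \{\delta_j : j \in \Lambda(x^*)\} \cup \{\delta_j' : j \notin \Lambda(x^*)\})$, which is positive because the index set is finite. For every $\kappa \in (0,\delta)$ we then have $x^* + \kappa\tau \in \mathbb{T}$ and $\mathcal{Y}_j(x^* + \kappa\tau) \preceq \tilde{0}$ for all $j$, so $x^* + \kappa\tau \in \mathbb{S}$, and therefore $\tau \in \hat{S}(x^*)$. Since $\tau$ was arbitrary, this establishes $\hat{\mathcal{Y}}(x^*) \subseteq \hat{S}(x^*)$.

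The step I expect to be the real obstacle, and the one deserving explicit care, is the inactive case, because Theorem \ref{jubu} requires the \emph{strict} relation $\mathcal{Y}_j(x^*) \prec \tilde{0}$, whereas $x^* \in \mathbb{S}$ together with $j \notin \Lambda(x^*)$ only guarantees $\mathcal{Y}_j(x^*) \preceq \tilde{0}$ with $\mathcal{Y}_j(x^*) \neq \tilde{0}$. Under the $LU$-order this weaker relation does not by itself force both endpoint functions $\underline{\mathcal{Y}}_j^{\varrho}$ and $\overline{\mathcal{Y}}_j^{\varrho}$ to be strictly negative for every $\varrho$, so continuity alone cannot exclude a constraint value drifting up to $\tilde{0}$ along $\tau$. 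I would therefore read inactivity as strict feasibility $\mathcal{Y}_j(x^*) \prec \tilde{0}$, the natural fuzzy analogue of $g_j(x^*) < 0$, and state this explicitly; with that reading the three radii combine cleanly and the inclusion follows, while the active-constraint part is immediate once Theorem \ref{thm3.1} is available.
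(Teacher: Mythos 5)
Your proof follows essentially the same route as the paper's: a radius $\delta_{0}$ from openness of $\mathbb{T}$, Theorem \ref{thm3.1} for the active constraints, the sign-preservation Theorem \ref{jubu} for the inactive ones, and the minimum of the finitely many radii to conclude $x^{*}+\kappa\tau\in\mathbb{S}$. The caveat you flag at the end is genuine and is in fact a step the paper glosses over: its proof simply asserts $\mathcal{Y}_{j}(x^{*})\prec\tilde{0}$ for $j\notin\Lambda(x^{*})$, whereas $x^{*}\in\mathbb{S}$ together with $\mathcal{Y}_{j}(x^{*})\neq\tilde{0}$ only yields $\mathcal{Y}_{j}(x^{*})\preceq\tilde{0}$ under the LU-order (some endpoint $\overline{\mathcal{Y}}_{j}^{\varrho}(x^{*})$ could still equal $0$, which Theorem \ref{jubu} cannot handle), so your explicit strict-feasibility reading of inactivity is the correct repair and makes the argument rigorous where the paper's is not.
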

\begin{proof}
Let $\tau\in\hat{\mathcal{Y}}(x^{*})$. As $x^{*}\in\mathbb{T}$ and exists some $\delta_{0}>0$ makes
\begin{equation}\label{(33)}
x^{*}+\kappa\tau\in\mathbb{T}~~\text{for}~\kappa\in(0, \delta_{0}).
\end{equation}
Since $\mathcal{Y}_{j}$ be continuous at $x^{*}$ and $\mathcal{Y}_{j}(x^{*})\prec\tilde{0}$ for every $j\not\in \Lambda(x^{*})$, then Theorem \ref{jubu} shows that there have $\delta_{i}>0$ leading to
\begin{equation}\label{(44)}
\mathcal{Y}_{j}(x^{*}+\kappa\tau)\prec\tilde{0}~~\text{for}~\varrho\in(0, \delta_{i}),~j\not\in \Lambda(x^{*}).
\end{equation}
Also, as $\tau\in\hat{\mathcal{Y}}(x^{*})$, from Theorem \ref{thm3.1}, for every $j\in \Lambda(x^{*})$, there have $\delta_{j}>0$ makes
\begin{equation}\label{(55)}
\mathcal{Y}_{j}(x^{*}+\kappa\tau)\prec \mathcal{Y}_{j}(x^{*})=\tilde{0}~~\text{for each}~\kappa\in(0, \delta_{j}).
\end{equation}
Let $\delta=\min\{\delta_{0}, \delta_{1}, \ldots, \delta_{s}\}$. From \eqref{(33)}-\eqref{(55)}, $x^{*}+\kappa\tau\in\mathbb{S}$ for every $\kappa\in(0, \delta)$. That is, $\tau\in\hat{S}(x^{*})$. Hence, the proof is completed.
\end{proof}
With the help of Lemma \ref{lem3.1}, we can characterize the local optimal solution of (FOP).

\begin{theorem}\label{thm3..10}
For $x^{*}\in\mathcal{O}$, define $\Lambda(x^{*})=\{i: \mathcal{Y}_{j}(x^{*})=\tilde{0}\}$. Suppose that $\mathcal{H}$ and $\mathcal{Y}_{j}$ ($j\in \Lambda(x^{*})$) be gH-differentiable and $\mathcal{Y}_{j}$, $j\not\in \Lambda(x^{*})$, be continuous at $x^{*}$. If $x^{*}$ is a local optimal solution of (FOP), then
$$\hat{\mathcal{H}}(x^{*})\cap\hat{\mathcal{Y}}(x^{*})=\emptyset,$$
where $\hat{\mathcal{H}}(x^{*})=\{\tau\in\mathbb{R}^{n}: \tau^{\top}\nabla \mathcal{H}(x^{*})\prec\tilde{0}\}$ and $\hat{\mathcal{Y}}(x^{*})=\{\tau\in\mathbb{R}^{n}: \tau^{\top}\nabla \mathcal{Y}_{j}(x^{*})\prec\tilde{0} ~\text{for}~j\in \Lambda(x^{*})\}$.
\end{theorem}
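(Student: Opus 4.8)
The plan is to argue by contradiction and to combine the descent property of Theorem \ref{thm3.1} with the feasible-direction inclusion of Lemma \ref{lem3.1}, exactly in the spirit of the unconstrained result Theorem \ref{thm3.2}. So suppose, contrary to the claim, that the intersection is nonempty and pick a direction $\tau\in\hat{\mathcal{H}}(x^{*})\cap\hat{\mathcal{Y}}(x^{*})$. The goal is to manufacture from $\tau$ a sequence of feasible points near $x^{*}$ whose objective values are strictly less than $\mathcal{H}(x^{*})$, which would contradict the local optimality of $x^{*}$ for (FOP).

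First I would exploit membership in $\hat{\mathcal{Y}}(x^{*})$. Since the feasible set $\mathcal{O}$ coincides with the set $\mathbb{S}$ of Lemma \ref{lem3.1}, and the hypotheses on $\mathcal{Y}_{j}$ (gH-differentiability for $j\in\Lambda(x^{*})$, continuity for $j\notin\Lambda(x^{*})$) are precisely those of that lemma, Lemma \ref{lem3.1} gives $\hat{\mathcal{Y}}(x^{*})\subseteq\hat{S}(x^{*})$. Hence $\tau\in\hat{S}(x^{*})$, which by Definition \ref{def3.2} furnishes a $\delta_{1}>0$ with $x^{*}+\kappa\tau\in\mathcal{O}$ for all $\kappa\in(0,\delta_{1})$; that is, the points $x^{*}+\kappa\tau$ remain feasible for small positive step sizes.

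Next I would exploit membership in $\hat{\mathcal{H}}(x^{*})$. By definition this means $\tau^{\top}\nabla\mathcal{H}(x^{*})\prec\tilde{0}$, so Theorem \ref{thm3.1} supplies a $\delta_{2}>0$ such that $\mathcal{H}(x^{*}+\kappa\tau)\prec\mathcal{H}(x^{*})$ for every $\kappa\in(0,\delta_{2})$. Setting $\delta=\min\{\delta_{1},\delta_{2}\}>0$, for each $\kappa\in(0,\delta)$ the point $x^{*}+\kappa\tau$ is simultaneously feasible and strictly better in objective value than $x^{*}$. Because such points can be taken arbitrarily close to $x^{*}$, this contradicts the assumption that $x^{*}$ is a local optimal solution of (FOP). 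The contradiction forces $\hat{\mathcal{H}}(x^{*})\cap\hat{\mathcal{Y}}(x^{*})=\emptyset$.

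I do not expect a genuine obstacle here; the proof is a clean assembly of two earlier results. The only points requiring care are bookkeeping ones: confirming that $\mathbb{S}$ in Lemma \ref{lem3.1} is literally the feasible set $\mathcal{O}$ of (FOP), and intersecting the two step-size thresholds by taking their minimum so that feasibility and strict descent hold on a common interval $(0,\delta)$. If anything deserves emphasis, it is that the continuity hypothesis on the inactive constraints ($j\notin\Lambda(x^{*})$) is exactly what Lemma \ref{lem3.1} needs to keep those constraints strictly satisfied along the ray, so no inactive constraint is violated for small $\kappa$.
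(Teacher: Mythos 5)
Your proposal is correct and follows essentially the same route as the paper: the paper simply chains Theorem \ref{thm3.2} (local optimality implies $\hat{\mathcal{H}}(x^{*})\cap\hat{S}(x^{*})=\emptyset$) with Lemma \ref{lem3.1} ($\hat{\mathcal{Y}}(x^{*})\subseteq\hat{S}(x^{*})$), whereas you inline the proof of Theorem \ref{thm3.2} by running the contradiction argument with Theorem \ref{thm3.1} directly. The content is identical, and if anything your unpacked version is slightly cleaner about applying the unconstrained result to the constrained feasible set $\mathcal{O}$.
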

\begin{proof}
Using Theorem \ref{thm3.2} and Lemma \ref{lem3.1}, we can get
\begin{equation*}
\begin{split}
x^{*}~ \text{is a local optimal solution}&\Longrightarrow \hat{\mathcal{H}}(x^{*})\cap\hat{S}(x^{*})=\emptyset\\
&\Longrightarrow\hat{\mathcal{H}}(x^{*})\cap\hat{\mathcal{Y}}(x^{*})=\emptyset.
\end{split}
\end{equation*}
\end{proof}
\begin{definition}\cite{Nehi2012}
The $\mathcal{M}=(m_{ij})_{s\times n}$ is said to be a fuzzy matrix if $m_{ij}\in\mathcal{F}_{\mathcal{C}}$ for every $i\in\{1, \ldots, s\}, j\in\{1, \ldots, n\}$, denoted by
$$
\mathcal{M}=\begin{pmatrix}
m_{11}&\ldots & m_{1n}\\
\vdots &\ddots & \vdots\\
m_{s1}& \ldots & m_{sn}
\end{pmatrix}_{s\times n}.
$$
The $\varrho$-cut set of $\mathcal{M}$ is defined as
$$
\mathcal{M}_{\varrho}=\begin{pmatrix}
[m_{11}]^{\varrho}&\ldots & [m_{1n}]^{\varrho}\\
\vdots &\ddots & \vdots\\
[m_{s1}]^{\varrho}& \ldots & [m_{sn}]^{\varrho}
\end{pmatrix}_{s\times n}.
$$
\end{definition}
\begin{theorem}\label{thm3.6}(Fuzzy Second Gordan's Theorem).
Consider a fuzzy matrix $\mathcal{M}=(m_{ij})_{s\times n}$. Then, just a conclusion of the below holds:
\begin{enumerate}[label=(\roman*)]
 \item there exists $y=(y_{1}, y_{2}, \ldots, y_{s})^{\top}\in\mathbb{R}^{s}$ makes $\mathcal{M}^{\top}y\prec(\tilde{0}, \tilde{0}, \ldots, \tilde{0})_{n}^{\top}$;
\item~ for all $\varrho\in[0, 1]$, there exists nonzero $x=(x_{1}, x_{2}, \ldots, x_{n})^{\top}\in\mathbb{R}^{n}$ with all $x_{i}\geq0$ such that $(0, 0, \ldots, 0)^{\top}_{s}\in \mathcal{M}_{\varrho}x$.
    \end{enumerate}
\end{theorem}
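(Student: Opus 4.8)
The plan is to establish the dichotomy in two stages: first that (i) and (ii) cannot hold simultaneously, and then that the failure of (ii) forces (i); together these give that exactly one alternative is true. Throughout I would pass to $\varrho$-cuts and write $[m_{ij}]^{\varrho}=[\underline{m}_{ij}^{\varrho}, \overline{m}_{ij}^{\varrho}]$, introducing the real endpoint matrices $\underline{\mathcal{M}}^{\varrho}=(\underline{m}_{ij}^{\varrho})$ and $\overline{\mathcal{M}}^{\varrho}=(\overline{m}_{ij}^{\varrho})$, so that the fuzzy assertions turn into assertions about real matrices. Using the cut formulas for addition and real scalar multiplication from Section~\ref{sec:2}, condition (i) becomes $\sum_{i=1}^{s}y_{i}[m_{ij}]^{\varrho}\prec_{LU}[0,0]$ for every $j$ and every $\varrho$, whereas $(0,\ldots,0)_{s}^{\top}\in\mathcal{M}_{\varrho}x$ in (ii) is precisely $\sum_{j=1}^{n}x_{j}\underline{m}_{ij}^{\varrho}\leq 0\leq\sum_{j=1}^{n}x_{j}\overline{m}_{ij}^{\varrho}$ for every $i$, where the hypotheses $x_{j}\geq 0$ keep the endpoints in their natural order.

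For the incompatibility I would argue by contradiction. Suppose (i) supplies $y^{0}\in\mathbb{R}^{s}$ and (ii) supplies, at a fixed $\varrho$, a nonzero $x^{0}\geq 0$. I then form the iterated interval sum $\sum_{j=1}^{n}x_{j}^{0}\big(\sum_{i=1}^{s}y_{i}^{0}[m_{ij}]^{\varrho}\big)$ and read it in two ways; the rearrangement is legitimate because real scalar multiplication distributes over interval addition (Section~\ref{sec:2}). Grouping by $j$, every inner bracket is $\prec_{LU}[0,0]$ by (i), and since $x^{0}\geq 0$ is nonzero the whole sum remains $\prec_{LU}[0,0]$, so in particular it excludes $0$. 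Grouping by $i$, every inner bracket $\sum_{j}x_{j}^{0}[m_{ij}]^{\varrho}$ contains $0$ by (ii), hence the scaled sum $\sum_{i}y_{i}^{0}\big(\sum_{j}x_{j}^{0}[m_{ij}]^{\varrho}\big)$ also contains $0$. The two readings are contradictory, so (i) and (ii) cannot both hold.

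For the reverse implication I would show that if (ii) fails then (i) holds, in the spirit of the construction used in Theorem~\ref{thm3.4}. The failure of (ii) produces a level $\varrho^{\ast}$ at which no nonzero $x\geq 0$ satisfies $\underline{\mathcal{M}}^{\varrho^{\ast}}x\leq 0\leq\overline{\mathcal{M}}^{\varrho^{\ast}}x$. Applying a real theorem of the alternative of Gordan--Motzkin type to the stacked endpoint matrix whose rows are those of $\underline{\mathcal{M}}^{\varrho^{\ast}}$ and of $-\overline{\mathcal{M}}^{\varrho^{\ast}}$, I would extract a multiplier vector $y^{0}\in\mathbb{R}^{s}$ together with a sign pattern that forces $\sum_{i}y_{i}^{0}[m_{ij}]^{\varrho^{\ast}}\prec_{LU}[0,0]$ for every $j$; this is the matrix analogue of the index partition $P,Q$ used in Theorem~\ref{thm3.4}.

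The delicate point, and what I expect to be the main obstacle, is upgrading this single-level inequality to the genuinely fuzzy statement (i), which demands $\sum_{i}y_{i}^{0}[m_{ij}]^{\varrho}\prec_{LU}[0,0]$ for all $\varrho\in[0,1]$ at once. Here I would exploit the nestedness of the cuts: since $\underline{m}_{ij}^{\varrho}$ is nondecreasing and $\overline{m}_{ij}^{\varrho}$ nonincreasing in $\varrho$, the interval $\sum_{i}y_{i}^{0}[m_{ij}]^{\varrho}$ shrinks as $\varrho$ increases and its upper endpoint is largest at $\varrho=0$; because an interval lying strictly left of $0$ has both endpoints negative, it then suffices to control the single widest cut $\varrho=0$. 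The crux is therefore to locate the failure of (ii) exactly at $\varrho=0$ and to verify that the resulting $y^{0}$ certifies (i) uniformly in $\varrho$; reconciling the universal quantifier over $\varrho$ in (ii) with the single-vector requirement in (i) is precisely where the argument must be handled with the greatest care.
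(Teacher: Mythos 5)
Your incompatibility stage is correct and is essentially the paper's own first half: the paper evaluates the same bilinear quantity in two ways, obtaining $x^{\top}(\mathcal{M}_{\varrho}^{\top}y_{0})\prec_{LU}[0,0]$ from (i) and $0\in(\mathcal{M}_{\varrho}x_{0})^{\top}y_{0}$ from (ii), which is exactly your double-sum rearrangement. Your single-level alternative step is also workable, and is in fact tighter than the paper's: applying a real theorem of the alternative to the stacked matrix formed by $\underline{\mathcal{M}}^{\varrho^{*}}$ and $-\overline{\mathcal{M}}^{\varrho^{*}}$ gives $p,q\geq 0$ with $(\underline{\mathcal{M}}^{\varrho^{*}})^{\top}p-(\overline{\mathcal{M}}^{\varrho^{*}})^{\top}q>0$ componentwise, and $y^{0}=q-p$ then satisfies, for every $j$, $\sum_{i}\max\{y^{0}_{i}\underline{m}^{\varrho^{*}}_{ij},\,y^{0}_{i}\overline{m}^{\varrho^{*}}_{ij}\}\leq\sum_{i}\bigl(q_{i}\overline{m}^{\varrho^{*}}_{ij}-p_{i}\underline{m}^{\varrho^{*}}_{ij}\bigr)<0$, i.e.\ the desired certificate at level $\varrho^{*}$. (The paper instead constructs its multiplier from one particular $x$, a step which its own negation of (ii) does not license.)

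The genuine gap is precisely the step you flagged, and it cannot be closed: the quantifiers run the wrong way. Failure of (ii) only hands you some level $\varrho^{*}$, and the resulting $y^{0}$ controls $\sum_{i}y^{0}_{i}[m_{ij}]^{\varrho}$ only for $\varrho\geq\varrho^{*}$, since those cuts are subsets of the $\varrho^{*}$-cut; the wider cuts with $\varrho<\varrho^{*}$ remain uncontrolled. Nor can you relocate the failure to $\varrho=0$: by nestedness, any witness $x$ of (ii) at some level is automatically a witness at level $0$, so level $0$ is where (ii) is easiest, not hardest, to satisfy, and failure at $\varrho^{*}$ says nothing about level $0$. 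In fact the dichotomy as stated is false, so no argument can succeed. Take $s=n=1$ and $m_{11}=\langle-1,1,2\rangle$: condition (ii) fails at $\varrho=1$ because $[m_{11}]^{1}=[1,1]\not\ni 0$, yet (i) also fails because $[m_{11}]^{0}=[-1,2]\ni 0$, so $y[m_{11}]^{0}$ contains $0$ for every $y\in\mathbb{R}$ and is never $\prec_{LU}[0,0]$; neither alternative holds (the same example refutes Theorem \ref{thm3.4}). The paper's proof conceals this same defect: its certificate is built at a single level, and the claimed contradiction with the all-level statement \eqref{(8888)} (which, incidentally, is printed there without the required negation) is never verified at the other levels. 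Both the theorem and your argument become correct if (ii) is demanded only at $\varrho=0$ (equivalently, at some level): then your Farkas step at level $0$ combined with your nestedness observation produces a single $y^{0}$ valid uniformly in $\varrho$.
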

\begin{proof}
Suppose (i) holds. On contrary, assume that (ii) also holds.

As (i) holds, there exists $y_{0}=(y_{1}^{0}, y_{2}^{0}, \ldots, y_{s}^{0})^{\top}\in\mathbb{R}^{s}$ such that $\mathcal{M}^{\top}y_{0}\prec(\tilde{0}, \tilde{0}, \ldots, \tilde{0})_{n}^{\top}$. Then, for all $\varrho\in[0, 1]$,
$$\mathcal{M}_{\varrho}^{\top}y_{0}\prec_{LU}([0, 0], [0, 0], \ldots, [0, 0])_{n}^{\top}.$$
For all nonzero $x=(x_{1}, x_{2}, \ldots, x_{n})^{\top}\in\mathbb{R}^{n}, x_{i}\geq0$, we have
\begin{equation}\label{(777)}
x^{\top}(\mathcal{M}_{\varrho}^{\top}y_{0})\prec_{LU}[0, 0] \Longrightarrow(x\mathcal{M}_{\varrho})^{\top}y_{0}\prec_{LU}[0, 0].
\end{equation}
As (ii) is also true, there exists a nonzero $x_{0}=(x^{0}_{1}, x^{0}_{2}, \ldots, x^{0}_{n})^{\top}$\\
$\in\mathbb{R}^{n},~x_{j}^{0}\geq0$ such that
\begin{equation*}\label{(888)}
(0, 0, \ldots, 0)^{\top}_{s}\in \mathcal{M}_{\varrho}x_{0}.
\end{equation*}
For any given $\varrho\in[0, 1]$, let $w=\mathcal{M}_{\varrho}x_{0}=(w_{1}, w_{2}, \ldots, w_{s})^{\top}$. Then, $0\in w_{j}~ \text{for all} ~j=1, 2, \ldots, s$, and
\begin{equation*}
(\mathcal{M}_{\varrho}x_{0})^{\top}y_{0}=\sum_{j=1}^{s}y_{j}^{0}w_{j}.
\end{equation*}
Thus,
\begin{equation}\label{(aaa)}
0\in(\mathcal{M}_{\varrho}x_{0})^{\top}y_{0}.
\end{equation}
Since \eqref{(777)} and \eqref{(aaa)} cannot hold simultaneously, a contradiction is obtained.

The other cases are proved below, suppose that (ii) holds, and (i) is false. From (i) is false knows that there no $y\in \mathbb{R}^{s}$ makes $\mathcal{M}^{\top}y\prec(\tilde{0}, \tilde{0}, \ldots, \tilde{0})_{n}^{\top}$. That is,
\begin{equation}\label{(8888)}
\mathcal{M}_{\varrho}^{\top}y\prec_{LU}([0, 0], [0, 0], \ldots, [0, 0])_{n}^{\top} ~\text{for all}~\varrho\in[0, 1].
\end{equation}
On contrary, assume that (ii) is false. Then, for any nonzero $x=(x_{1}, x_{2}, \ldots, x_{n})^{\top}\in\mathbb{R}^{n}$ with every~$x_{j}\geq0$ and for some $\varrho\in[0, 1]$,
\begin{equation*}\label{(999)}
\begin{split}
&(0, 0, \ldots, 0)_{s}^{\top}\notin \mathcal{M}_{\varrho}x,~\\
\text{or,}~&\exists ~j\in\{1, \ldots, s\} ~\text{so that}~0\notin w_{j},\\
\text{or,}~&\exists ~j\in\{1, \ldots, s\} ~\text{so that}~[0, 0]\prec_{LU} w_{j}~\text{or}~w_{j}\prec_{LU}[0, 0],
\end{split}
\end{equation*}
where $\mathcal{M}_{\varrho}x=(w_{1}, w_{2}, \ldots, w_{s})^{\top}.$\\
Let
$$P=\{j: 0\in w_{j}, j\in\{1, \ldots, s\}\}$$
and
$$Q=\{j: 0\notin w_{j}, j\in\{1, \ldots, s\}\}.$$
Obviously, $Q\not=\emptyset$, $P\cup Q=\{1, \ldots, s\}$ and $P\cap Q=\emptyset$.\\
Build a vector $y_{0}=(y_{1}^{0}, y_{2}^{0}, \ldots, y_{s}^{0})^{\top}$ through
\begin{equation*}
y_{j}^{0}=\begin{cases}
 0,&\text{if }j\in P,\\
 1,&\text{if }j\in Q~\text{and}~w_{j}\prec_{LU}[0, 0],\\
 -1,&\text{if }j\in Q~\text{and}~[0, 0]\prec_{LU}w_{j}.
  \end{cases}
\end{equation*}
Then
\begin{equation*}
\begin{split}
&\sum_{j\in Q}y_{j}^{0}w_{j}+\sum_{j\in P}y_{j}^{0}w_{j}\prec_{LU}[0, 0],\\
\text{or,}~&y_{0}^{\top}\mathcal{M}_{\varrho}\prec_{LU}[0, 0].\\
\end{split}
\end{equation*}
So, for any nonzero $x=(x_{1}, x_{2}, \ldots, x_{n})^{\top}\in\mathbb{R}^{n}$~with every~$x_{j}\geq0$, we get
\begin{equation}\label{(10100)}
\begin{split}
&~y_{0}^{\top}(\mathcal{M}_{\varrho}x)\prec_{LU}[0, 0], \\
\text{or,}~&x^{\top}(\mathcal{M}_{\varrho}y_{0})\prec_{LU}[0, 0].
\end{split}
\end{equation}
The inequality \eqref{(10100)} can be true only when $\mathcal{M}_{\varrho}^{\top}y_{0}\prec_{LU}([0, 0],$\\
 $[0, 0], \ldots, [0, 0])_{n}^{\top}$. As \eqref{(8888)} and \eqref{(10100)} are contradictory, we get the desired conclusion.
\end{proof}

\begin{theorem}\label{thm3.7}(Fuzzy Fritz-John necessary condition).
Suppose $\mathcal{H}$ and
$\mathcal{Y}_{j}: \mathbb{T}\rightarrow\mathcal{F}_{\mathcal{C}}$ for $j=1, 2, \ldots, s$ be fuzzy functions.
Let $x^{*}$ is a local optimal point of (FOP) and define $\Lambda(x^{*})=\{j: \mathcal{Y}_{j}(x^{*})=\tilde{0}\}$. If $\mathcal{H}$ and $\mathcal{Y}_{j}$ ($j\in \Lambda(x^{*})$) are gH-differentiable and $\mathcal{Y}_{j}$ ($j\notin \Lambda(x^{*})$) be continuous at $x^{*}$, then there have $\kappa_{0}, \kappa_{j}\in\mathbb{R}$, $j\in \Lambda(x^{*})$ makes
\begin{equation*}
\begin{cases}
&(0, 0, \ldots, 0)_{n}^{\top}\in (\kappa_{0}\nabla \mathcal{H}_{\varrho}(x^{*})+\sum_{j\in \Lambda(x^{*})}\kappa_{j}\nabla {\mathcal{Y}_{j}}_{\varrho}(x^{*}))\\
&~~~~\text{for all}~\varrho\in[0, 1],\\
&\kappa_{0}\geq0, \kappa_{j}\geq0,~j\in \Lambda(x^{*}),\\
&(\kappa_{0}, \kappa_{\Lambda})\not=(0, 0^{\mid\Lambda(x^{*})\mid}),
\end{cases}
\end{equation*}
where $\kappa_{\Lambda}$ denotes a vector and its components are $\kappa_{j}$, $j\in \Lambda(x^{*})$.

Also, if $\mathcal{Y}_{j}$ ($j\notin \Lambda(x^{*})$) be also gH-differentiable at $x^{*}$, then there are $\kappa_{0}, \kappa_{1}, \ldots, \kappa_{s}$ that makes
\begin{equation*}
\begin{cases}
&(0, 0, \ldots, 0)_{n}^{\top}\in (\kappa_{0}\nabla \mathcal{H}_{\varrho}(x^{*})+\sum_{j=1}^{s}\kappa_{j}\nabla {\mathcal{Y}_{j}}_{\varrho}(x^{*}))\\
&~~~~~\text{for all}~\varrho\in[0, 1],\\
&\kappa_{i}\mathcal{Y}_{j}(x^{*})=\tilde{0}, j=1, 2, \ldots, s,\\
&\kappa_{0}\geq0, \kappa_{j}\geq0, j=1, 2, \ldots, s,\\
&(\kappa_{0}, \kappa)\not=(0, 0^{s}).
\end{cases}
\end{equation*}
\end{theorem}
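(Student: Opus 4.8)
The plan is to derive the multiplier condition by feeding the geometric necessary condition of Theorem \ref{thm3..10} into the Fuzzy Second Gordan's Theorem (Theorem \ref{thm3.6}). Since $x^{*}$ is a local optimal point of (FOP) and the stated gH-differentiability of $\mathcal{H},\mathcal{Y}_{j}$ ($j\in\Lambda(x^{*})$) together with continuity of the inactive constraints hold, Theorem \ref{thm3..10} yields $\hat{\mathcal{H}}(x^{*})\cap\hat{\mathcal{Y}}(x^{*})=\emptyset$. Unwinding the definitions of the two cones, this is precisely the statement that \emph{no} direction $\tau\in\mathbb{R}^{n}$ satisfies simultaneously $\tau^{\top}\nabla\mathcal{H}(x^{*})\prec\tilde{0}$ and $\tau^{\top}\nabla\mathcal{Y}_{j}(x^{*})\prec\tilde{0}$ for every $j\in\Lambda(x^{*})$.

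Next I would package the active gradients into a single fuzzy matrix. Writing $\Lambda(x^{*})=\{j_{1},\ldots,j_{p}\}$, I form the $(p+1)\times n$ fuzzy matrix $\mathcal{M}$ whose first row is $\nabla\mathcal{H}(x^{*})^{\top}$ and whose remaining rows are $\nabla\mathcal{Y}_{j}(x^{*})^{\top}$, $j\in\Lambda(x^{*})$. For any $\tau\in\mathbb{R}^{n}$ the fuzzy vector $\mathcal{M}\tau$ has components $\tau^{\top}\nabla\mathcal{H}(x^{*})$ and $\tau^{\top}\nabla\mathcal{Y}_{j}(x^{*})$, so the empty-intersection statement reads exactly: there is no $\tau$ with $\mathcal{M}\tau\prec(\tilde{0},\ldots,\tilde{0})^{\top}$. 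Applying Theorem \ref{thm3.6} to the matrix $\mathcal{M}^{\top}$ (so that its alternative (i), written with a transpose, becomes $\exists\,\tau$ with $\mathcal{M}\tau\prec(\tilde{0},\ldots,\tilde{0})^{\top}$), the failure of alternative (i) forces alternative (ii).

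I would then read off the multipliers. Alternative (ii), applied to $\mathcal{M}^{\top}$, supplies a nonzero vector $(\kappa_{0},\kappa_{j_{1}},\ldots,\kappa_{j_{p}})$ with nonnegative entries for which $(0,\ldots,0)_{n}^{\top}\in\kappa_{0}\nabla\mathcal{H}_{\varrho}(x^{*})+\sum_{j\in\Lambda(x^{*})}\kappa_{j}\nabla{\mathcal{Y}_{j}}_{\varrho}(x^{*})$, together with $\kappa_{0}\geq 0$, $\kappa_{j}\geq 0$ and $(\kappa_{0},\kappa_{\Lambda})\neq(0,0^{|\Lambda(x^{*})|})$; this is the first assertion. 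For the second assertion, under the extra hypothesis that the inactive $\mathcal{Y}_{j}$ ($j\notin\Lambda(x^{*})$) are gH-differentiable at $x^{*}$, I simply set $\kappa_{j}=0$ for $j\notin\Lambda(x^{*})$. The added gradient terms vanish, so the inclusion is unchanged; complementary slackness $\kappa_{j}\mathcal{Y}_{j}(x^{*})=\tilde{0}$ holds trivially, since either $\mathcal{Y}_{j}(x^{*})=\tilde{0}$ (for $j\in\Lambda(x^{*})$) or $\kappa_{j}=0$ (for $j\notin\Lambda(x^{*})$); and the sign conditions and nontriviality $(\kappa_{0},\kappa)\neq(0,0^{s})$ are inherited.

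The matrix bookkeeping and the zero-padding of the inactive constraints are routine. The delicate point I expect to demand the most care is matching the two alternatives of Theorem \ref{thm3.6} to the descent-cone condition through the transpose, and, above all, ensuring that the multiplier vector $(\kappa_{0},\kappa_{\Lambda})$ can be chosen \emph{independently of} $\varrho$: Theorem \ref{thm3.6}(ii) is phrased level-set by level-set, whereas the conclusion requires a single $(\kappa_{0},\kappa_{\Lambda})$ valid for all $\varrho\in[0,1]$. I would therefore check that the construction behind Theorem \ref{thm3.6} produces a $\varrho$-uniform solution---the only $\varrho$-dependent ingredient being the sign pattern of the components of $\mathcal{M}_{\varrho}\kappa$---so that a common multiplier vector survives across all levels.
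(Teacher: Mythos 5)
Your proposal is correct and follows essentially the same route as the paper: it too passes from Theorem \ref{thm3..10} ($\hat{\mathcal{H}}(x^{*})\cap\hat{\mathcal{Y}}(x^{*})=\emptyset$) to the Fuzzy Second Gordan's Theorem applied to the matrix whose columns are $\nabla\mathcal{H}(x^{*})$ and $\nabla\mathcal{Y}_{j}(x^{*})$, $j\in\Lambda(x^{*})$ (your $\mathcal{M}^{\top}$), reads the multipliers off alternative (ii), and obtains the second assertion by setting $\kappa_{j}=0$ for $j\notin\Lambda(x^{*})$ together with the trivial complementary slackness. The $\varrho$-uniformity point you flag is genuine but is not resolved by the paper either: its proof simply takes alternative (ii) of Theorem \ref{thm3.6} to supply a single nonzero $\eta$ with nonnegative entries satisfying $(0,\ldots,0)_{n}^{\top}\in\mathcal{M}_{\varrho}\eta$ for all $\varrho\in[0,1]$, silently swapping the quantifiers in the stated $\forall\varrho\,\exists x$ form of (ii).
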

\begin{proof}
Since $x^{*}$ be local optimal point of (FOP), from Theorem \ref{thm3..10}, we know that  $\hat{\mathcal{H}}(x^{*})\cap\hat{\mathcal{Y}}(x^{*})=\emptyset$. Then, there no $\tau\in\mathbb{R}^{n}$ that makes
\begin{equation}\label{2020}
\tau^{\top}\nabla \mathcal{H}(x^{*})\prec\tilde{0} ~\text{and}~\tau^{\top}\nabla \mathcal{Y}_{j}(x^{*})\prec\tilde{0}, \forall j\in \Lambda(x^{*}).
\end{equation}
Set $\mathcal{M}$ be a matrix with column $\nabla \mathcal{H}(x^{*})$ and $\nabla \mathcal{Y}_{j}(x^{*}), j\in \Lambda(x^{*})$, i.e.,
$$\mathcal{M}=[\nabla \mathcal{H}(x^{*}), ~[\nabla \mathcal{Y}_{j}(x^{*})]_{j\in \Lambda(x^{*})}]_{n\times (1+\mid\Lambda(x^{*})\mid)}.$$
It can be obtained from \eqref{2020} that
\begin{equation}\label{2121}
\mathcal{M}^{\top}\tau\prec (\tilde{0}, \tilde{0}, \ldots, \tilde{0})^{\top}_{1+\mid\Lambda(x^{*})\mid}~\text{for no}~\tau\in \mathbb{R}^{n}.
\end{equation}
So, we know from Theorem \ref{thm3.6}, there is a nonzero $\eta=(\eta_{j})_{\mid\Lambda(x^{*})+1\mid\times1}\in\mathbb{R}^{\mid\Lambda(x^{*})+1\mid}, \eta_{j}\geq0$ makes $(0, 0, \ldots, 0)_{n}^{\top}\in \mathcal{M}_{\varrho}\eta$ for all $\varrho\in[0, 1]$. Let
\begin{equation}\label{2222}
\eta=[\kappa_{0}, \kappa_{j}]^{\top}_{j\in \Lambda(x^{*})}.
\end{equation}
Substituting \eqref{2222} in $(0, 0, \ldots, 0)_{n}^{\top}\in \mathcal{M}_{\varrho}\eta$, we get
\begin{equation*}
\begin{cases}
&(0, 0, \ldots, 0)_{n}^{\top}\in (\kappa_{0}\nabla \mathcal{H}_{\varrho}(x^{*})+\sum_{j\in \Lambda(x^{*})}\kappa_{j}\nabla {\mathcal{Y}_{j}}_{\varrho}(x^{*}))\\
&~~~~~\text{for all}~\varrho\in[0, 1],\\
&\kappa_{0}\geq0, \kappa_{j}\geq0,~j\in \Lambda(x^{*}),\\
&(\kappa_{0}, \kappa_{\Lambda})\not=(0, 0^{\mid\Lambda(x^{*})\mid}).
\end{cases}
\end{equation*}
Thus the first part of Theorem \ref{thm3.7} is proved.

Since $\mathcal{Y}_{j}(x^{*})=\tilde{0}, j\in \Lambda(x^{*})$, we get $\kappa_{j} \mathcal{Y}_{j}(x^{*})=\tilde{0}$. If $\mathcal{Y}_{j}$ ($j\notin \Lambda(x^{*})$) be gH-differentiable at $x^{*}$, let $\kappa_{j}=0$ $(j\notin \Lambda(x^{*})$), we get the second part of Theorem \ref{thm3.7}.
\end{proof}

\begin{example}\label{exa2}
Consider the FOP:
\begin{equation*}
\begin{split}
\min~~&\mathcal{H}(x)=\langle-2, -1, 1\rangle x^{2}+\langle-8, -4, 3\rangle x+\langle1, 2, 4\rangle,\\
\text{subject to}~~&\mathcal{Y}_{1}(x)=\langle-4, 5, 7\rangle x\ominus_{gH}\langle-8, 10, 14\rangle\preceq\tilde{0},\\
&\mathcal{Y}_{2}(x)=\langle-3, -2, 0\rangle x\ominus_{gH}\langle2, 3, 6\rangle \preceq\tilde{0}.
\end{split}
\end{equation*}
Obviously, $\mathcal{H}, \mathcal{Y}_{1}$ and $\mathcal{Y}_{2}$ are gH-differentiable on $(0, +\infty)$. The image of the objective function $\mathcal{H}$ is shown in Figure \ref{fff}. At the feasible point $x^{*}=2$, we have
$$\mathcal{Y}_{1}(2)=\tilde{0}~\text{and}~\mathcal{Y}_{2}(2)=\langle-8, -7, -6\rangle.$$
Hence, $\Lambda(x^{*})=\{1\}$, and we get
\begin{equation*}
\begin{split}
& \nabla \mathcal{H}_{\varrho}(2)=[-16+8\varrho, 7-15\varrho],\\
& \nabla {\mathcal{Y}_{1}}_{\varrho}(2)=[-4+9\varrho, 7-2\varrho].
\end{split}
\end{equation*}
Taking $\kappa_{0}=5, \kappa_{1}=8$ and $\kappa_{2}=0$, the conclusion of Theorem \ref{thm3.7} is true.

\begin{figure}[h]
  \vspace{0cm}
  \centering
  \includegraphics[width=8cm]{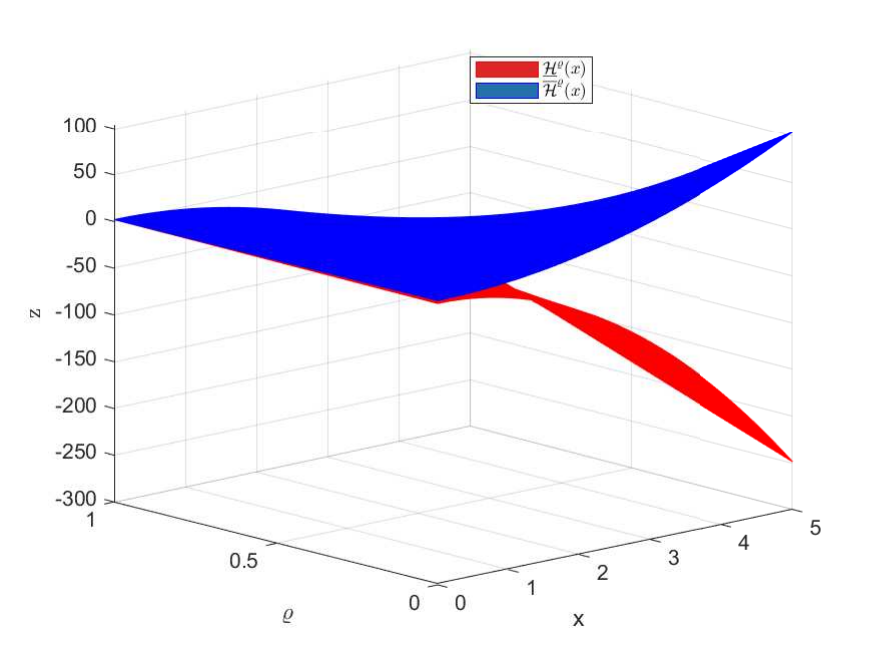}
  \vspace{0cm}
  \centering
  \caption{The objective function $\mathcal{H}_{\varrho}(x)$ in Example \ref{exa2}.}\label{fff}
\end{figure}
\end{example}

\begin{definition}
The set of $n$ fuzzy vectors $\{\mathcal{U}_{1}, \mathcal{U}_{2}, \ldots, \mathcal{U}_{n}\}$ is said to be linearly independent if for $\vartheta_{1}, \vartheta_{2}, \ldots, \vartheta_{n}\in\mathbb{R}$, $(0, 0, \ldots, 0)_{n}^{\top}\in \vartheta_{1}{\mathcal{U}_{1}}_{\varrho}+\vartheta_{2}{\mathcal{U}_{2}}_{\varrho}+\ldots+\vartheta_{n}{\mathcal{U}_{n}}_{\varrho}~\text{holds for all}~ \varrho\in[0, 1]$ $~\text{if and only if}~\vartheta_{1}=\vartheta_{2}=\ldots=\vartheta_{n}=0.$
\end{definition}

\begin{theorem} \label{thm3.8}(Fuzzy KKT necessary condition).
Let $\mathcal{H}, \mathcal{Y}_{j}: \mathbb{T}\rightarrow\mathcal{F}_{\mathcal{C}}, j=1, 2, \ldots, s$, be fuzzy functions on the open set $\mathbb{T}$ and $x^{*}$ is a local optimal solution and define $\Lambda(x^{*})=\{j: \mathcal{Y}_{j}(x^{*})=\tilde{0}\}$. Assume that~$\mathcal{H}$ and $\mathcal{Y}_{j}$ ($j\in \Lambda(x^{*})$) are gH-differentiable and~$\mathcal{Y}_{j}$ ($j\notin \Lambda(x^{*})$) be continuous at $x^{*}$. If~the set of fuzzy vectors $\{\nabla {\mathcal{Y}_{j}}(x^{*}): j\in \Lambda(x^{*})\}$ is linearly independent,
then there have $\kappa_{j}\in\mathbb{R}$, $j\in \Lambda(x^{*})$ makes
\begin{equation*}
\begin{cases}
&(0, 0, \ldots, 0)_{n}^{\top}\in (\nabla \mathcal{H}_{\varrho}(x^{*})+\sum_{j\in \Lambda(x^{*})}\kappa_{i}\nabla {\mathcal{Y}_{j}}_{\varrho}(x^{*}))\\
&~~~~\text{for all}~\varrho\in[0, 1],\\
&\kappa_{j}\geq0 ~\text{for}~j\in \Lambda(x^{*}).
\end{cases}
\end{equation*}
If $\mathcal{Y}_{j}$ is also gH-differentiable at $x^{*}$ for $j\notin I(x^{*})$, then there are $\kappa_{1}, \kappa_{2}, \ldots, \kappa_{s}$ so that
\begin{equation*}
\begin{cases}
&(0, 0, \ldots, 0)_{n}^{\top}\in (\nabla \mathcal{H}_{\varrho}(x^{*})+\sum_{j=1}^{s}\kappa_{j}\nabla {\mathcal{Y}_{j}}_{\varrho}(x^{*}))\\
&~~~~\text{for all}~\varrho\in[0, 1],\\
&\kappa_{j}\mathcal{Y}_{j}(x^{*})=\tilde{0}, j=1, 2, \ldots, s,\\
&\kappa_{j}\geq0 ~\text{for}~j=1, 2, \ldots, s.\\
\end{cases}
\end{equation*}
\end{theorem}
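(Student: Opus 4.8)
The plan is to obtain this KKT condition as a consequence of the Fritz-John necessary condition (Theorem~\ref{thm3.7}), the decisive point being that the linear independence hypothesis forces the multiplier on the objective gradient to be strictly positive, so that it can be normalised to $1$.

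First I would apply Theorem~\ref{thm3.7}. Since $x^{*}$ is a local optimal solution of (FOP) and the differentiability and continuity hypotheses are exactly those required there, there exist $\kappa_{0}\geq 0$ and $\kappa_{j}\geq 0$ for $j\in\Lambda(x^{*})$, with $(\kappa_{0},\kappa_{\Lambda})\neq(0,0^{\mid\Lambda(x^{*})\mid})$, such that $(0,0,\ldots,0)_{n}^{\top}\in\kappa_{0}\nabla\mathcal{H}_{\varrho}(x^{*})+\sum_{j\in\Lambda(x^{*})}\kappa_{j}\nabla{\mathcal{Y}_{j}}_{\varrho}(x^{*})$ for all $\varrho\in[0,1]$.

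The main step is to rule out $\kappa_{0}=0$. Suppose on the contrary that $\kappa_{0}=0$. Then the inclusion reduces to $(0,0,\ldots,0)_{n}^{\top}\in\sum_{j\in\Lambda(x^{*})}\kappa_{j}\nabla{\mathcal{Y}_{j}}_{\varrho}(x^{*})$ for every $\varrho\in[0,1]$, while $(\kappa_{0},\kappa_{\Lambda})\neq(0,0^{\mid\Lambda(x^{*})\mid})$ together with $\kappa_{0}=0$ gives $\kappa_{\Lambda}\neq 0$, i.e. some $\kappa_{j}\neq 0$. This contradicts the assumed linear independence of $\{\nabla\mathcal{Y}_{j}(x^{*}):j\in\Lambda(x^{*})\}$, since by the definition preceding the theorem that inclusion can hold for all $\varrho$ only if every $\kappa_{j}=0$. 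Hence $\kappa_{0}>0$. I then divide the inclusion by $\kappa_{0}$; because $1/\kappa_{0}>0$ and $0=0/\kappa_{0}$, scaling each interval by $1/\kappa_{0}$ preserves membership of the zero vector, so putting $\widetilde{\kappa}_{j}:=\kappa_{j}/\kappa_{0}\geq 0$ yields $(0,0,\ldots,0)_{n}^{\top}\in\nabla\mathcal{H}_{\varrho}(x^{*})+\sum_{j\in\Lambda(x^{*})}\widetilde{\kappa}_{j}\nabla{\mathcal{Y}_{j}}_{\varrho}(x^{*})$ for all $\varrho\in[0,1]$ with $\widetilde{\kappa}_{j}\geq 0$, which is the first assertion.

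For the second assertion, if $\mathcal{Y}_{j}$ is also gH-differentiable at $x^{*}$ for $j\notin\Lambda(x^{*})$, I extend the multipliers by setting $\kappa_{j}=0$ for those indices. Then the added terms vanish, so $\sum_{j=1}^{s}\kappa_{j}\nabla{\mathcal{Y}_{j}}_{\varrho}(x^{*})=\sum_{j\in\Lambda(x^{*})}\kappa_{j}\nabla{\mathcal{Y}_{j}}_{\varrho}(x^{*})$ and the stationarity inclusion is unchanged. Complementary slackness $\kappa_{j}\mathcal{Y}_{j}(x^{*})=\tilde{0}$ holds in both cases: for $j\in\Lambda(x^{*})$ we have $\mathcal{Y}_{j}(x^{*})=\tilde{0}$, and for $j\notin\Lambda(x^{*})$ we have $\kappa_{j}=0$. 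The only genuinely delicate point is the $\kappa_{0}>0$ step; the rest is bookkeeping, with the mild caveat that the normalisation must be read at the level of the interval-valued $\varrho$-cuts rather than as ordinary division of fuzzy numbers, which is legitimate precisely because $\kappa_{0}>0$ makes the scaling monotone and zero-preserving.
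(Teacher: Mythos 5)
Your proposal is correct and follows essentially the same route as the paper's own proof: invoke the Fritz--John condition (Theorem~\ref{thm3.7}), use the linear-independence hypothesis to force $\kappa_{0}>0$, normalise the multipliers by $\kappa_{0}$, and set $\kappa_{j}=0$ for $j\notin\Lambda(x^{*})$ to get complementary slackness. The only difference is one of detail — you spell out the contradiction establishing $\kappa_{0}>0$ and justify that positive scaling of the $\varrho$-cut inclusion preserves membership of the zero vector, both of which the paper asserts without elaboration.
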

\begin{proof}
By Theorem \ref{thm3.7}, there have $\kappa_{0}, \kappa'_{j}\in\mathbb{R}$ for all $j\in \Lambda(x^{*})$ not all zeros such that
\begin{equation*}
\begin{cases}
&(0, 0, \ldots, 0)_{n}^{\top}\in (\kappa_{0}\nabla \mathcal{H}_{\varrho}(x^{*})+\sum_{j\in \Lambda(x^{*})}\kappa'_{j}\nabla {\mathcal{Y}_{j}}_{\varrho}(x^{*}))~\text{for all}~\varrho\in[0, 1],\\
&\kappa_{0}\geq0, \kappa'_{j}\geq0,~j\in \Lambda(x^{*}).
\end{cases}
\end{equation*}
There must be $\kappa_{0}>0$, otherwise $\{\nabla \mathcal{Y}_{j}(x^{*}): j\in \Lambda(x^{*})\}$ is not linearly independent.

Consider $\kappa_{j}=\frac{\kappa'_{j}}{\kappa_{0}}$. Then, $\kappa_{j}\geq 0$, $j\in \Lambda(x^{*})$ and
$$(0, 0, \ldots, 0)_{n}^{\top}\in (\nabla \mathcal{H}_{\varrho}(x^{*})+\sum_{j\in \Lambda(x^{*})}\kappa_{j}\nabla {\mathcal{Y}_{j}}_{\varrho}(x^{*}))~\text{for every}~\varrho\in[0, 1].$$
Since $\mathcal{Y}_{j}(x^{*})=\tilde{0}, j\in \Lambda(x^{*})$, then $\kappa_{j}{\mathcal{Y}_{j}}_{\varrho}(x^{*})=\tilde{0}$ for each $\varrho\in[0, 1]$. If $\mathcal{Y}_{j}$ ($j\notin \Lambda(x^{*})$) is gH-differentiable at $x^{*}$, let $\kappa_{j}=0$, $j\notin \Lambda(x^{*})$, the desired conclusion is obtained.
\end{proof}

\begin{example}
Consider the FOP:
\begin{equation*}
\begin{split}
\min~~&\mathcal{H}(x_{1}, x_{2})=\langle-4, -2, 0\rangle x_{1}^{2}+\langle1, 2, 3\rangle x_{2}+\langle-2, 0, 5\rangle x_{2}^{2}\\
&~~~~~~~~~~~~~~~~~~~~+\langle3, 5, 6\rangle x_{1}^{2}x_{2},\\
\text{subject to}~~&\mathcal{Y}_{1}(x_{1}, x_{2})=\langle-2, 0, 3\rangle x_{1}+\langle-6, -5, -3\rangle x_{2}\\
&~~~~~~~~~~~~~~~~~~~~\ominus_{gH}\langle-12, -10, -6\rangle\preceq\tilde{0},\\
&\mathcal{Y}_{2}(x_{1}, x_{2})=\langle3, 5, 6\rangle x_{1}+\langle-8, -7, -5\rangle x_{2}\\
&~~~~~~~~~~~~~~~~~~~~\ominus_{gH}\langle-2, -1, 0\rangle\preceq\tilde{0}.
\end{split}
\end{equation*}
Obviously, $\mathcal{H}, \mathcal{Y}_{1}$ and $\mathcal{Y}_{2}$ are gH-differentiable on $\mathbb{R}^{2}$, $x^{*}=(0, 2)\in \mathcal{O}$ and
$$\mathcal{Y}_{1}(x^{*})=\tilde{0}~\text{and}~\mathcal{Y}_{2}(x^{*})=\langle-14, -13, -10\rangle.$$
So, $\Lambda(x^{*})=\{1\}$, and we get
\begin{equation*}
\begin{split}
\nabla \mathcal{H}_{\varrho}(x^{*})&=(D_{1}\mathcal{H}_{\varrho}(x^{*}), D_{2}\mathcal{H}_{\varrho}(x^{*}))^{\top}\\
&=([0, 0], [-7+9\varrho, 23-21\varrho])^{\top},\\
\nabla {\mathcal{Y}_{1}}_{\varrho}(x^{*})&=(D_{1}{\mathcal{Y}_{1}}_{\varrho}(x^{*}), D_{2}{\mathcal{Y}_{1}}_{\varrho}(x^{*}))^{\top}\\
&=([-2+2\varrho, 3-3\varrho], [-6+\varrho, -3-2\varrho])^{\top}.
\end{split}
\end{equation*}
Taking $\kappa_{0}=2.5, \kappa_{1}=1$ and $\kappa_{2}=0$, we get the result of Theorem \ref{thm3.7}.
Taking $\kappa_{0}=1, \kappa_{1}=0.4$ and $\kappa_{2}=0$, the result of Theorem \ref{thm3.8} is obtained.
\end{example}

\begin{theorem}\label{thm3.9}(Fuzzy KKT sufficient condition).
Let $\mathcal{H}: \mathbb{T} \rightarrow\mathcal{F}_{\mathcal{C}}$ and $\mathcal{Y}_{j}: \mathbb{T} \rightarrow\mathcal{F}_{\mathcal{C}}, j=1, 2, \ldots, s$, be gH-differentiable convex fuzzy functions on the open set $\mathbb{T}$. If there exist $\kappa_{1}, \kappa_{2}, \ldots, \kappa_{s}\in\mathbb{R}$ and $x^{*}\in\mathcal{O}$ such that
\begin{equation*}
\begin{cases}
&(0, 0, \ldots, 0)_{n}^{\top}\in (\nabla \mathcal{H}_{\varrho}(x^{*})+\sum_{j=1}^{s}\kappa_{j}\nabla {\mathcal{Y}_{j}}_{\varrho}(x^{*}))~\text{for all}~\varrho\in[0, 1],\\
&\kappa_{j}\mathcal{Y}_{j}(x^{*})=\tilde{0}, j=1, 2, \ldots, s,\\
&\kappa_{0}\geq0, \kappa_{j}\geq0, j=1, 2, \ldots, s.
\end{cases}
\end{equation*}
Then, $x^{*}$ is an optimal solution of (FOP).
\end{theorem}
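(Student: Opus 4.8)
The plan is to argue by contradiction, using the convexity gradient inequality established in the (unlabelled) theorem just before Section~\ref{sec:3}, which for a gH-differentiable convex fuzzy function gives $(x_{2}-x_{1})^{\top}\nabla\mathcal{H}(x_{1})\preceq\mathcal{H}(x_{2})\ominus_{gH}\mathcal{H}(x_{1})$ for all $x_{1},x_{2}\in\mathbb{T}$, applied both to the objective $\mathcal{H}$ and to the active constraints. Suppose $x^{*}$ is \emph{not} an optimal solution of (FOP). Then there is a feasible $x\in\mathcal{O}$ with $\mathcal{H}(x)\prec\mathcal{H}(x^{*})$; set $\tau=x-x^{*}$.

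First I would convert non-optimality into information about the objective's gradient. By Remark~\ref{rem2.4}, $\mathcal{H}(x)\prec\mathcal{H}(x^{*})$ is the same as $\mathcal{H}(x)\ominus_{gH}\mathcal{H}(x^{*})\prec\tilde{0}$, and combining this with the gradient inequality yields $\tau^{\top}\nabla\mathcal{H}(x^{*})\preceq\mathcal{H}(x)\ominus_{gH}\mathcal{H}(x^{*})\prec\tilde{0}$, hence $\tau^{\top}\nabla\mathcal{H}(x^{*})\prec\tilde{0}$. Passing to $\varrho$-cuts, the upper endpoint of the interval $\tau^{\top}\nabla\mathcal{H}_{\varrho}(x^{*})$ is strictly negative for every $\varrho\in[0,1]$. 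For the constraints I split on $\Lambda(x^{*})$: if $j\notin\Lambda(x^{*})$ then $\mathcal{Y}_{j}(x^{*})\neq\tilde{0}$, so the complementary-slackness condition $\kappa_{j}\mathcal{Y}_{j}(x^{*})=\tilde{0}$ forces $\kappa_{j}=0$ and these terms vanish; if $j\in\Lambda(x^{*})$ then $\mathcal{Y}_{j}(x^{*})=\tilde{0}$ gives $\mathcal{Y}_{j}(x)\ominus_{gH}\mathcal{Y}_{j}(x^{*})=\mathcal{Y}_{j}(x)\preceq\tilde{0}$, so the gradient inequality delivers $\tau^{\top}\nabla\mathcal{Y}_{j}(x^{*})\preceq\tilde{0}$, i.e. the upper endpoint of $\tau^{\top}\nabla{\mathcal{Y}_{j}}_{\varrho}(x^{*})$ is at most $0$ for every $\varrho$.

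Then I would invoke the stationarity hypothesis. The coordinatewise containment $(0,\ldots,0)_{n}^{\top}\in\nabla\mathcal{H}_{\varrho}(x^{*})+\sum_{j=1}^{s}\kappa_{j}\nabla{\mathcal{Y}_{j}}_{\varrho}(x^{*})$ means $0$ lies in each coordinate interval; since $0=\tau^{\top}\cdot 0$ and membership is preserved under real scalar multiplication and interval addition, this propagates to $0\in\tau^{\top}\nabla\mathcal{H}_{\varrho}(x^{*})+\sum_{j\in\Lambda(x^{*})}\kappa_{j}\,\tau^{\top}\nabla{\mathcal{Y}_{j}}_{\varrho}(x^{*})$ for every $\varrho$ (the inactive terms having dropped out). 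Reading off the upper endpoint of this interval and using $\kappa_{j}\geq0$, that endpoint is $\overline{(\tau^{\top}\nabla\mathcal{H}_{\varrho}(x^{*}))}+\sum_{j\in\Lambda(x^{*})}\kappa_{j}\,\overline{(\tau^{\top}\nabla{\mathcal{Y}_{j}}_{\varrho}(x^{*}))}$, which by the previous paragraph is strictly negative. Yet $0$ belonging to the interval forces its upper endpoint to be nonnegative, a contradiction; hence no such $x$ exists and $x^{*}$ is optimal.

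The applications of Remark~\ref{rem2.4} and of the convexity gradient inequality are routine; the step I expect to demand the most care is the interval bookkeeping in the last paragraph. Because the real scalars $\tau_{i}$ may be negative and therefore reverse the orientation of each coordinate interval, one must check that the coordinatewise memberships $0\in(\cdot)_{i}$ still yield $0\in\tau^{\top}(\cdot)$, and that the upper endpoint of the combined interval splits exactly as $\overline{(\tau^{\top}\nabla\mathcal{H}_{\varrho})}+\sum_{j}\kappa_{j}\overline{(\tau^{\top}\nabla{\mathcal{Y}_{j}}_{\varrho})}$. A brief case distinction on the sign of each $\tau_{i}$, using the formulas for $[\vartheta m]^{\varrho}$ and $[m+l]^{\varrho}$ from Section~\ref{sec:2}, settles both points and completes the proof.
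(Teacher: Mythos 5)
Your proposal is correct and rests on the same ingredients as the paper's own proof: the convexity gradient inequality $(x-x^{*})^{\top}\nabla\mathcal{H}(x^{*})\preceq\mathcal{H}(x)\ominus_{gH}\mathcal{H}(x^{*})$, complementary slackness to kill the inactive constraint terms, and propagation of the membership $0\in(\cdot)$ through $\tau^{\top}(\cdot)$, followed by an endpoint comparison. The only difference is organizational — you argue contrapositively (assuming a feasible $x$ with $\mathcal{H}(x)\prec\mathcal{H}(x^{*})$ and deriving a strictly negative upper endpoint where $0$ must lie), whereas the paper argues directly that the stationarity interval is $\preceq_{LU}\mathcal{H}_{\varrho}(x)\ominus_{gH}\mathcal{H}_{\varrho}(x^{*})$, so the latter cannot be $\prec_{LU}[0,0]$.
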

\begin{proof}
Since $(0, 0, \ldots, 0)_{n}^{\top}\in (\nabla \mathcal{H}_{\varrho}(x^{*})+\sum_{j=1}^{s}\kappa_{j}\nabla {\mathcal{Y}_{j}}_{\varrho}(x^{*}))$~for all~$\varrho\in[0, 1]$. Then, for any $x\in\mathbb{T}$,
$$0\in (\nabla \mathcal{H}_{\varrho}(x^{*})+\sum_{j=1}^{s}\kappa_{j}\nabla{\mathcal{Y}_{j}}_{\varrho}(x^{*}))^{\top}(x-x^{*}).$$
From $\mathcal{H}$ and $\mathcal{Y}_{j}, j=1, 2, \ldots, s$, are gH-differentiable and convex knows that
\begin{equation*}
\begin{split}
&~~~~(\nabla \mathcal{H}_{\varrho}(x^{*})+\sum_{j=1}^{s}\kappa_{j}\nabla{\mathcal{Y}_{j}}_{\varrho}(x^{*}))^{\top}(x-x^{*})\\
&=\nabla \mathcal{H}_{\varrho}(x^{*})^{\top}(x-x^{*})+\sum_{j=1}^{s}\kappa_{j}\nabla {\mathcal{Y}_{j}}_{\varrho}(x^{*})^{\top}(x-x^{*})\\
&\preceq_{LU}(\mathcal{H}_{\varrho}(x)\ominus_{gH}\mathcal{H}_{\varrho}(x^{*}))+\sum_{j=1}^{s}\kappa_{j}{(\mathcal{Y}_{j}}_{\varrho}(x)\ominus_{gH}{\mathcal{Y}_{j}}_{\varrho}(x^{*}))\\
&\preceq_{LU}(\mathcal{H}_{\varrho}(x)\ominus_{gH}\mathcal{H}_{\varrho}(x^{*})).
\end{split}
\end{equation*}
Therefore, either $[0, 0]\preceq_{LU} \mathcal{H}_{\varrho}(x)\ominus_{gH}\mathcal{H}_{\varrho}(x^{*})$ or $0\in (\mathcal{H}_{\varrho}(x)\ominus_{gH}\mathcal{H}_{\varrho}(x^{*}))$ for each $\varrho\in[0, 1]$. In any situation, $x^{*}$ be an optimal solution of (FOP). Thus, the conclusion follows.
\end{proof}

\section{Application to classification of fuzzy data}
\label{sec:4}
Consider data set: $\mathbb{D}=\{(x_{i}, y_{i}): x_{i}\in \mathbb{R}^{n}, y_{i}\in\{-1, 1\}, i=1, 2, \ldots, s\}$ and SVMs is an effective method to classify this data set, primarily using the following optimization model:
\begin{equation}\label{haha}
\begin{cases}
\min~~~&{H}(\lambda, \ell)=\frac{1}{2}\|\lambda\|^{2},\\
\text{subject to}~~~&y_{i}(\lambda^{\top}x_{i}-\ell)\geq1,~i=1, 2, \ldots, s,
\end{cases}
\end{equation}
here $\lambda=(\lambda_{1}, \lambda_{2}, \ldots, \lambda_{n})^{\top}\in \mathbb{R}^{n}$ and $\ell\in\mathbb{R}$ represents normal vector and bias respectively. The constraint $y_{i}(\lambda^{\top}x_{i}-\ell)\geq1$ indicates the condition on which side of the separating hyperplane $\lambda^{\top}x_{i}-\ell=\pm1$ the data point lies.

Data in practical problems are often accompanied by uncertainty. For example, to determine whether it will rain next weekend, we will look at the temperature, humidity and wind speed, and get the result that the temperature is about $33 {^{\circ}}$C, the humidity is about $45\%$, and the wind speed is about $12$ km/hr. For such data, people tend to estimate and truncate to get an accurate value, and with the accumulation of truncation error and rounding error, this may lead to deviations from the actual results or even the exact opposite. Fuzzy numbers are an effective tool for solving such uncertain data, and their application has gained a great deal of interest from researchers in recent years. Therefore, our next main objective is to investigate the problem of how fuzzy number data can be classified. Clearly, model \eqref{haha} is not suitable for this kind of data because these are fuzzy data. Therefore, we modify the traditional SVM problem for fuzzy data set
$$\{(\mathcal{U}_{i}, y_{i}): \mathcal{U}_{i}\in\mathcal{F}_{\mathcal{C}}^{n}, y_{i}\in\{-1, 1\}, i=1, 2, \ldots, s\}$$
through
\begin{equation}\label{hehe}
\begin{cases}
\min~~~&\mathcal{H}(\lambda, \ell)=\frac{1}{2}\|\lambda\|^{2},\\
\text{subject to}~~~&\mathcal{Y}_{i}(\lambda, \ell)=\tilde{1}\ominus_{gH}y_{i}(\lambda^{\top}\mathcal{U}_{i}\ominus_{gH}\ell)\preceq\tilde{0},\\
&~~~~~~~~~~i=1, 2, \ldots, s,
\end{cases}
\end{equation}
here we assume that components of $\mathcal{U}_{i}$'s are fuzzy numbers, and the data set consisting the core points of the fuzzy data is linearly separable.

We note that the gradients of the functions $\mathcal{H}$ and $\mathcal{Y}_{i}$ in \eqref{hehe} are
\begin{equation*}
\begin{split}
&\nabla\mathcal{H}(\lambda, \ell)=(D_{1}\mathcal{H}(\lambda, \ell), D_{2}\mathcal{H}(\lambda, \ell))^{\top}=(\lambda, \tilde{0})^{\top},\\
&\nabla\mathcal{Y}_{i}(\lambda, \ell)=(D_{1}\mathcal{Y}_{i}(\lambda, \ell), D_{2}\mathcal{Y}_{i}(\lambda, \ell))^{\top}=(-y_{i}\mathcal{U}_{i}, -y_{i})^{\top},
\end{split}
\end{equation*}
for all $i=1, 2, \ldots, s$, where $D_{1}$ and $D_{2}$ represent the gH-partial derivatives of $\lambda$ and $\ell$, respectively.

By Theorem \ref{thm3.8},  for an optimal solution $(\lambda^{*}, \ell^{*})$ of \eqref{hehe} there are $\kappa_{1}, \kappa_{2}, \ldots, \kappa_{s}\in\mathbb{R}$ and $\kappa_{1}, \kappa_{2}, \ldots, \kappa_{s}\geq0$ that make
\begin{equation}\label{con27}
(0, 0, \ldots, 0)_{n+1}^{\top}\in((\lambda^{*}, 0)^{\top}+\sum_{i=1}^{s}\kappa_{i}(-y_{i}{\mathcal{U}_{i}}_{\varrho}, -y_{i})^{\top})~\text{for all}~\varrho\in[0, 1],
\end{equation}
and
\begin{equation}\label{con28}
\tilde{0}=\kappa_{i}\mathcal{Y}_{i}(\lambda^{*}, \ell^{*}), i=1, 2, \ldots, s.
\end{equation}
\eqref{con27} may be split into
\begin{equation*}
(0, 0, \ldots, 0)_{n}^{\top}\in(\lambda^{*}+\sum_{i=1}^{m}(-\kappa_{i}y_{i}){\mathcal{U}_{i}}_{\varrho})~\text{for all}~\varrho\in[0, 1],
\end{equation*}
and
$$\sum_{i=1}^{s}\kappa_{i}y_{i}=0.$$
As follows from Theorems \ref{thm3.8} and \ref{thm3.9}, we obtain that the set of conditions for the optimal solution of \eqref{hehe} are
\begin{equation}\label{2999}
\begin{cases}
&(0, 0, \ldots, 0)_{n}^{\top}\in(\lambda^{*}+\sum_{i=1}^{s}(-\kappa_{i}y_{i}){\mathcal{U}_{i}}_{\varrho}) ~\text{for all}~\varrho\in[0, 1],\\
&\sum_{i=1}^{s}\kappa_{i}y_{i}=0,\\
&\kappa_{i}\mathcal{Y}_{i}(\lambda^{*}, \ell^{*})=\tilde{0}, i=1, 2, \ldots, s.
\end{cases}
\end{equation}
The data points $\mathcal{U}_{i}$ with $\kappa_{i}\neq0$ are known as fuzzy support vectors. It can be observed from \eqref{con28} for every $\kappa_{i}>0$, we can get $\mathcal{Y}_{i}(\lambda^{*}, \ell^{*})=\tilde{0}$.

Notice that for each $i$ and a given $\lambda^{*}$, we can get a $\ell_{i}$ from $\mathcal{Y}_{i}(\lambda, \ell)=\tilde{0}$, and we consider
\begin{equation}\label{3000}
\ell^{*}=\bigwedge_{\forall i, k_{i}>0}\ell_{i}
\end{equation}
as the bias corresponding to a given $\lambda^{*}$. We employ here the intersection operator ``$\bigwedge$" because $\ell^{*}$ must be satisfied with $\mathcal{Y}_{i}(\lambda^{*}, \ell^{*})=\tilde{0}$ for every $i$'s with $k_{i}>0$. Note that as each $\ell_{i}\in\mathcal{F}_{\mathcal{C}}$, then $\ell^{*}\in\mathcal{F}_{\mathcal{C}}$. As the core points of the fuzzy data are assumed to be linearly separable, the core of $\ell^*$ is nonempty. Since each component of $\mathcal{U}_{i}\in\mathcal{F}_{\mathcal{C}}^{n}$, $i = 1, 2, \ldots, s$, geometrically we get a fuzzy point with rectangular base corresponding to each $\mathcal{U}_i$. In the top left corner of Figure  \ref{figure_svm}, a fuzzy point on the $\mathbb{R}^{2}$ plane corresponding to an $\mathcal{U}_{i} = (\mathcal{U}_{i}^{1}, \mathcal{U}_{i}^{2})$ with two components is drawn.
\begin{figure}[h]
\centering
\includegraphics[scale=0.6]{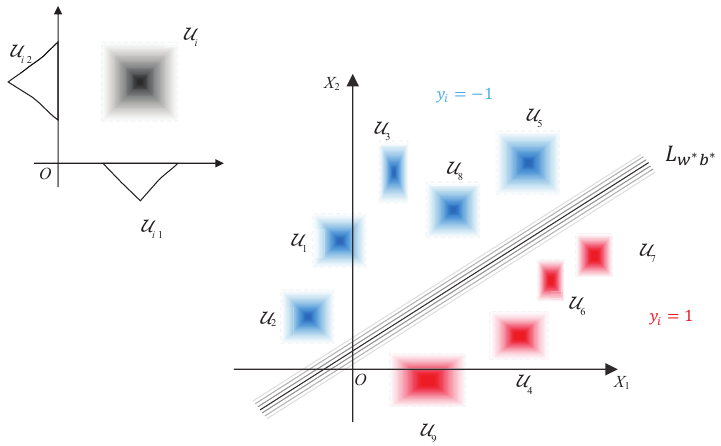}
\caption{An optimal hyperplane to classify a given fuzzy data set}
\label{figure_svm}
\end{figure}

For a given $\lambda^{*} \in \mathbb{R}$ and a fuzzy bias $\ell^{*}$ with nonempty core, we get a symmetric fuzzy line/plane $L_{\lambda^{*} \ell^{*}}$: this symmetric fuzzy line/plane $L_{\lambda^{*} \ell^{*}}$ is the union of all parallel lines/planes ${\lambda^*}^{\top} x - \ell_{\varrho} = 0$, where $\ell_{\varrho} \in \ell^{*}$ is a number in the support of $\ell^{*}$ with membership value $\varrho$. As $(\lambda^{*}, \ell^{*})$ is an optimal solution to \eqref{hehe}, we call an $L_{\lambda^{*} \ell^{*}}$ as an optimal hyperplane to classify the given fuzzy data set. In Figure \ref{figure_svm}, a possible optimal hyperplane $L_{\lambda^{*} \ell^{*}}$ to classify the given fuzzy data set of blue-colored fuzzy points with class $y_{i}=-1$ and red-colored fuzzy points with class $y_{i} = 1$.

The main procedure to effectively classify fuzzy datasets according to the fuzzy Fritz-John optimality theorem are as follows.
\begin{enumerate}
\item[\bf{Step 1}] Construct mathematical formula for fuzzy optimization problem \eqref{hehe};
\item[\bf{Step 2}] Using Theorem \ref{thm3.7}, obtain the fuzzy Fritz-John optimality condition \eqref{2999};
\item[\bf{Step 3}] Substitute the fuzzy data into the first equation in \eqref{2999} to get the value of $\lambda^{*}$;
\item[\bf{Step 4}] Take $\lambda_{i}\in\lambda^{*}$, substitute into the third equation in \eqref{2999}, and get the value of $\ell_{i}$;
\item[\bf{Step 5}] Take $\ell_{i}$ to the formula \eqref{3000} and get the value of $\ell^{*}$;
\item[\bf{Step 6}] Get the hyperplane $L_{\lambda^{*} \ell^{*}}={\lambda^{*}}^{\top}\mathcal{U}_{i}\ominus_{gH}\ell^{*}$ to see if the data set can be effectively classified, if not, go back to Step 4 and re-take $\lambda_{i}$ to solve the hyperplane; If yes, the optimal result is output.
\end{enumerate}
\begin{example}
Consider the fuzzy data set
\begin{equation*}
\begin{split}
&\mathcal{U}_{1}=[\langle2, 5, 6\rangle, \langle1, 2, 3\rangle], y_{1}=1,\\
&\mathcal{U}_{2}=[\langle3, 6, 7\rangle, \langle1, 3, 5\rangle], y_{2}=1,\\
&\mathcal{U}_{3}=[\langle4, 7, 8\rangle, \langle1, 2, 3\rangle], y_{3}=1,\\
&\mathcal{U}_{4}=[\langle0, 1, 2\rangle, \langle1, 3, 5\rangle], y_{4}=-1,\\
&\mathcal{U}_{5}=[\langle1, 2, 3\rangle, \langle1, 3, 5\rangle], y_{5}=-1,\\
&\mathcal{U}_{6}=[\langle0, 2, 3\rangle, \langle2, 5, 6\rangle], y_{6}=-1.
\end{split}
\end{equation*}

We will utilize the FOP SVM \eqref{hehe} to derive a classification hyperplane for the above data set, that is, we require discovering the possible solutions $(\lambda, \ell)$ of \eqref{2999} in conjunction with the corresponding $\kappa_{i}$'s.

We note that $\sum_{i=1}^{6}\kappa_{i}y_{i}=0$ when $(\kappa_{1}, \kappa_{2}, \kappa_{3}, \kappa_{4}, \kappa_{5}, \kappa_{6})=(1, 0, 0, 0, 1, 0)$, and for all~$\varrho\in[0, 1]$, the first condition in \eqref{2999} simplifies to
\begin{equation}\label{3131}
\begin{split}
&(0, 0)^{\top}\in \lambda+(-1){\mathcal{U}_{1}}_{\varrho}+{\mathcal{U}_{5}}_{\varrho},\\
\text{or,}~&\lambda\in (-1)((-1){\mathcal{U}_{1}}_{\varrho}+{\mathcal{U}_{5}}_{\varrho}),\\
\text{or,}~&\lambda\in ([4\varrho-1, 5-2\varrho], [3\varrho-2, 4-3\varrho]).
\end{split}
\end{equation}
Denote $\lambda=(\lambda_{1}, \lambda_{2})^{\top}\in \mathbb{R}^{2}$, the condition \eqref{3131} is simplified as
\begin{equation*}\label{3232}
4\varrho-1\leq \lambda_{1}\leq 5-2\varrho ~\text{and}~3\varrho-2\leq \lambda_{2}\leq 4-3\varrho~~\text{for all}~\varrho\in[0, 1].
\end{equation*}
Let we select $\lambda_{1}^{*}=3$ and $\lambda_{2}^{*}=1$, the third condition in \eqref{2999} and \eqref{3000} yields the set of probable values of the bias $\ell$ as
\begin{equation*}
\begin{split}
&\bigwedge_{i: \kappa_{i}>0}\{\ell: \mathcal{Y}_{i}(\lambda, \ell)=\tilde{0}\}\\
=&\{\ell\in\mathbb{R}: \mathcal{Y}_{1}(\lambda^{*}, \ell)=\tilde{0}\}\wedge\{\ell\in\mathbb{R}: \mathcal{Y}_{5}(\lambda^{*}, \ell)=\tilde{0}\}\\
=&\{\ell\in\mathbb{R}: \ell\in[6+10\varrho, 20-4\varrho], ~\forall \varrho\in[0, 1]\}\\
&~\wedge\{\ell\in\mathbb{R}: \ell\in[5+5\varrho, 15-5\varrho], ~\forall \varrho\in[0, 1]\}\\
=&\{\ell\in\mathbb{R}: \ell\in[6+10\varrho, 15-5\varrho], ~\forall \varrho\in[0, \tfrac{9}{15}]\}.
\end{split}
\end{equation*}
Consequently, which corresponds with $\lambda^{*}=(3, 1)^{\top}$, the collection of classifying hyperplanes is governed from
$$3\mathcal{U}_{1}+\mathcal{U}_{2}\ominus_{gH}\ell=\tilde{0},~\ell\in\{\ell\in\mathbb{R}: \ell\in[6+10\varrho, 15-5\varrho], ~ \varrho\in[0, \tfrac{9}{15}]\}.$$
It is worth noting that the value of the objective function $\mathcal{H}$ is equal to $5$ regardless of the choice of
$\ell$ in $\{\ell\in\mathbb{R}: \ell\in[6+10\varrho, 15-5\varrho], ~\varrho\in[0, \tfrac{9}{15}]\}$.
\end{example}

\section{Conclusions}\label{sec:5}
The major goal of this paper has been studying the optimality conditions for FOPs. Firstly, the descent direction cone of the fuzzy function has been  defined, and we have used it to derive first-order optimality conditions with the help of cone of feasible directions. Furthermore, we have extended the Gordan's theorem for the systems of fuzzy inequalities and used it to derive the necessary optimality conditions of Fritz-John and KKT for FOPs. Finally, we have applied the results obtained in this paper to a simple linearly separable SVM problem of fuzzy data sets. In the following, we will proceed to study this kind of problem, hoping to apply it for nonlinear classification and SVM with soft margins.

\section*{Acknowledgement}
The work as supported by the Postgraduate Research \& Practice Innovation Program of Jiangsu Province (KYCX23\_0668). D. Ghosh acknowledges financial support of the research grants MATRICS (MTR/2021/000696) and CRG (CRG/2022/\\
001347) from SERB, India.


\begin{thebibliography}{}
\bibitem{Ghosh2023} Anshika A, Ghosh D, Mesiar R, Yao HR and  Chauhan RS (2023) Generalized-Hukuhara Subdifferential Analysis and Its Application in Nonconvex Composite Interval Optimization Problems. Inform Sci 622: 771--793.

\bibitem{Bede}  Bede B and Stefanini L (2013) Generalized differentiability of fuzzy-valued functions. Fuzzy Set Syst 230: 119--141.

\bibitem{Bazaraa} Bazaraa MS (2013) Nonlinear Programming: Theory and Algorithms. 3rd edition, Wiley Publishing.

\bibitem{Chalco-Cano2015} Chalco-Cano Y, Silva GN and Rufi\'{a}n-Lizana A (2015) On the Newton method for solving fuzzy optimization problems. Fuzzy Set Syst 272: 60--69.

\bibitem{Chalco-Cano} Chalco-Cano Y, Lodwick WA, Osuna-G\'{o}mez R and Rufi\'{a}n-Lizana A (2016) The Karush-Kuhn-Tucker optimality conditions for fuzzy optimization problems. Fuzzy Optim Decis Making 15(1): 57--73.

\bibitem{Chalco-Cano2013} Chalco-Cano Y, Lodwick WA and  Rom\'{a}n-Flores H (2013) The Karush-Kuhn-Tucker optimality conditions for a class of fuzzy optimization problems using strongly generalized derivative. in: Proceedings of the 2013 Joint IFSA World Congress and NAFIPS Annual Meeting, IFSA/NAFIPS pp. 203--208.

\bibitem{Chauhan2021} Chauhan RS and Ghosh D (2021) An erratum to ``Extended Karush-Kuhn-Tucker condition for constrained interval optimization problems and its application in support vector machines". Inform Sci 559: 309--313.

\bibitem{Ghosh2022ORT} Debnath AK and Ghosh D (2022) Generalized-Hukuhara penalty method for optimization
problem with interval-valued functions and its application in interval-valued portfolio optimization problems. Oper Res Lett 50(5): 602--609.

\bibitem{Ghosh2019}Ghosh D, Singh A, Shukla KK and Manchanda K (2019) Extended Karush-Kuhn-Tucker condition for constrained interval optimization problems and its application in support vector machines. Inform Sci 504: 276--292.

\bibitem{Ghosh2022CAM} Ghosh D (2022) Interval-valued value function and its application in interval optimization
problems. Comput Appl Math 41(4): 1--26.

\bibitem{Guo} Guo Y, Ye G, Liu W, Zhao D and Trean\c{t}\v{a} S (2022) On symmetric gH-derivative: Applications to dual interval-valued optimization problems. Chaos Soliton Fract 158: 112068.

\bibitem{Ghosh2022SC} Kumar K, Ghosh D and Kumar G (2022) Weak sharp minima for interval-valued functions
and its primal-dual characterizations using generalized Hukuhara subdifferentiability. Soft Comput 26(19): 10253--10273.

\bibitem{Luhandjula2015} Luhandjula MK (2015) Fuzzy optimization: Milestones and perspectives. Fuzzy Set Syst 274: 4--11.

\bibitem{Liang2020} Liang P, Hu J, Li B, Liu Y and Chen X (2020) A group decision making with probability linguistic preference relations based on nonlinear optimization model and fuzzy cooperative games. Fuzzy Optim Decis Ma 19(4): 499--528.

\bibitem{Panigrahi2008} M Panigrahi, Panda G and Nanda S (2008) Convex fuzzy mapping with differentiability and its application in fuzzy optimization. Eur J Oper Res 185(1): 47--62.

\bibitem{Nehi2012} Nehi HM and Daryab A (2012) Saddle point optimality conditions in fuzzy optimization problems. Int J Fuzzy Syst 14(1): 11--21.

\bibitem{Najariyan2015} Najariyan M and Farahi MH (2015) A new approach for solving a class of fuzzy optimal control systems under generalized Hukuhara differentiability. J Franklin I 352(5): 1836--1849.

\bibitem{Osuna2016} Osuna-G\'{o}mez R, Chalco-Cano Y, Rufi\'{a}n-Lizana A and Hern\'{a}ndez-Jim\'{e}nez B (2016) Necessary and sufficient conditions for fuzzy optimality problems. Fuzzy Set Syst 296: 112-123.

\bibitem{Stefanini2009} Stefanini L and Bede B (2009) Generalized Hukuhara differentiability of interval-valued functions and interval differential equations. Nonlinear Anal-Theor 71(3--4): 1311--1328.

\bibitem{Stefanini2019} Stefanini L and Arana-Jim\'{e}nez M (2019) Karush-Kuhn-Tucker conditions for interval and fuzzy optimization in several variables under total and directional generalized differentiability. Fuzzy Set Syst 362: 1--34.

\bibitem{Stefanini} Stefanini L (2010) A generalization of Hukuhara difference and division for interval and fuzzy arithmetic. Fuzzy Set Syst 161(11): 1564--1584.

\bibitem{Wu2007} Wu HC (2007) Duality theory in fuzzy optimization problems formulated by the Wolfe's primal and dual pair. Fuzzy Optim Decis Making 6: 179--198.

\bibitem{Wu2009} Wu HC (2009) The optimality conditions for optimization problems with convex constraints and multiple fuzzy-valued objective functions. Fuzzy Optim Decis Making 8(3): 295--321.

\bibitem{Yousefi2018} Yousefi A and Pishvaee MS (2018) A fuzzy optimization approach to integration of physical and financial flows in a global supply chain under exchange rate uncertainty. Int J Fuzzy Syst 20(8): 2415--2439.

  \bibitem{Zadeh} Zadeh LA (1965) Fuzzy sets. Information and Control 8(1965): 338--353.
  
\bibitem{Zhang2008} Zhang Y and Chi Z (2008) A Fuzzy support vector classifier based on Bayesian optimization. Fuzzy Optim Decis Ma 7(1): 75--86.

\bibitem{Zhang20222} Zhang J, Xu Z, Feng F and Yagerc R (2022) Two classes of granular solutions and related optimality conditions for interval type-2 fuzzy optimization. Inform Sci 612: 974--993.
\end{thebibliography}
\end{document}